\documentclass[12pt]{article}

\usepackage{amssymb,amsmath,amsthm}

\topmargin -0.5in
\textheight 9.0in
\textwidth 6.5in
\oddsidemargin 0.0in
\evensidemargin 0.0in

\newcommand{\ovl}{\overline}
\newcommand{\n}{\noindent}

\newcommand{\cl}[1]{{\mathcal{#1}}}
\newcommand{\bb}[1]{{\mathbb{#1}}}

\numberwithin{equation}{section}

\theoremstyle{plain}
\newtheorem{lem}{Lemma}[section]
\newtheorem{pro}[lem]{Proposition}
\newtheorem{thm}[lem]{Theorem}

\theoremstyle{definition}

\theoremstyle{remark}
\newtheorem{rem}[lem]{Remark}

\begin{document}

\null\vspace{.5in}
\setcounter{page}{1}
\begin{LARGE}
\begin{center}
VANISHING OF SECOND COHOMOLOGY\\ FOR TENSOR PRODUCTS OF\\ TYPE II$_1$ VON NEUMANN ALGEBRAS
\end{center}
\end{LARGE}
\bigskip

\begin{center}
\begin{tabular}{cc}
{\large Florin Pop}&{ \large Roger R. Smith}\\
&\\
Department of Mathematics & Department of Mathematics\\
and Computer Science& Texas A\&M University\\
Wagner College & College Station, TX 77843\\
Staten Island, NY 10301 &{} \\
&\\
fpop@wagner.edu & rsmith@math.tamu.edu
\end{tabular}
\end{center}

\vspace{0.5in}

\begin{abstract}
 We show that the second cohomology group $H^2(M\ovl\otimes N, M\ovl\otimes N)$ is always zero for arbitrary type II$_1$ von Neumann algebras $M$ and $N$.
\end{abstract}

\vspace{0.5in}

\noindent Key Words:\qquad von Neumann algebra, Hochschild cohomology, tensor product

\vspace{0.5in}

\noindent AMS Classification: 46L10, 46L06
\newpage

\section{Introduction}\label{sec1}

\indent 

The theory of bounded Hochschild cohomology for von Neumann algebras was initiated by Johnson, Kadison and Ringrose in a series of papers \cite{JKR,KR1,KR2}, which laid the foundation for subsequent developments. These were a natural outgrowth of the theorem of Kadison \cite{K} and Sakai \cite{S} which established that every derivation $\delta\colon \ M\to M$ on a von Neumann algebra $M$ is inner; $H^1(M,M)=0$ in cohomological terminology. While cohomology groups can be defined for general $M$-bimodules (see Section \ref{sec2} for definitions), this derivation result ensured special significance for $M$ as a bimodule over itself. When $M$ is represented on a Hilbert space $H$, then $B(H)$ is also an important $M$-bimodule, but here the known results are less definitive. For example, it is not known whether every derivation $\delta\colon \ M\to B(H)$ is inner, a problem known to be equivalent to the similarity problem \cite{Ki}. In \cite{KR2}, it was shown the $H^n(M,M)=0$ for all $n\ge 1$ when $M$ is an injective von Neumann algebra, a class which includes the type I algebras. They conjectured that this should be true for all von Neumann algebras, now known as the Kadison--Ringrose conjecture. The purpose of this paper is to verify this for the second cohomology of tensor products of type II$_1$ von Neumann algebras.

The study of this conjecture reduces to four cases in parallel with the type decomposition of Murray and von Neumann. Three of these are solved. As noted above, the type I case was determined at the outset of the theory, while the types II$_\infty$ and III cases were solved by Christensen and Sinclair \cite{CS3} after they had developed the theory of completely bounded multilinear maps \cite{CS1} and applied it, jointly with Effros \cite{CES}, to cohomology into $B(H)$. They showed that $H^n_{cb}(M,M)=0$, $n\ge 1$, for all von Neumann algebras, where the subscript indicates that all relevant multilinear maps are required to be completely bounded. Since then, all progress has hinged on reducing a given cocycle to one which is completely bounded and then quoting their result. In this paper we follow a different path, although complete boundedness will play an important role.

The one remaining open case is that of type II$_1$ von Neumann algebras. There are several positive results for special classes:\ the McDuff factors \cite{CS3}, those factors with Cartan subalgebras \cite{PS,CPSS,SSam,Cam}, and those with property $\Gamma$ \cite{CS3,C3,CPSSgam}. While tensor products form a large class of type II$_1$ von Neumann algebras, the prime factors fall outside our scope. The best known examples are the free group factors, shown to be prime by Ge \cite{G}, and these do not lie in any of the classes already mentioned, so nothing is known about their cohomology.

Section \ref{sec2} gives a brief review of definitions and some results that we will need subsequently. The heart of the paper is Section \ref{sec3} where we prove that $H^2(M\ovl\otimes N, M\ovl\otimes N) = 0$ for separable type II$_1$ von Neumann algebras. This restriction is made in order to be able to choose certain special hyperfinite subalgebras that are only available in this setting. The proof proceeds through a sequence of lemmas which reduce a given cocycle to one with extra features, after which we can exhibit it as a coboundary. In this process, particular use is made of complete boundedness and the basic construction, \cite{J}, for containments of type II$_1$ algebras. Section \ref{sec4} handles the general case by deducing it from the separable situation using several known techniques to be found in \cite[\S 6.5]{SSbook}. However, Lemma \ref{lem4.1} appears to be new.

For general background on cohomology we refer to the survey article \cite{Rs} and the monograph \cite{SSbook}. The theory of complete boundedness is covered in several books \cite{ER,Pa,Pis}, while \cite{SSfbook} contains an introduction to the basic construction algebra.

\section{Preliminaries and notation}\label{sec2}

\indent 

Since this paper is only concerned with second cohomology, we will only give the definitions at this level, referring to \cite{SSbook} for the general case. Let $A$ be a $C^*$-algebra with an $A$-bimodule $V$. For $n=1,2,3$, ${\cl L}^n(A,V)$ denotes the space of bounded $n$-linear maps from $A \times\cdots\times A$ into $V$, while ${\cl L}^0(A,V)$ is defined to be $V$. For $v\in V$ and $\phi_n\in {\cl L}^n(A,V)$, $n=1,2$, the coboundary map $\partial\colon \ {\cl L}^n(A,V)\to {\cl L}^{n+1}(A,V)$ is defined as follows:
\begin{align}\label{eq2.1}
 \partial v(a) &= va-av,\qquad a\in A,\\
\label{eq2.2}
\partial\phi_1(a_1,a_2) &= a_1\phi_1(a_2) - \phi_1(a_1a_2) + \phi_1(a_1)a_2,\qquad a_i\in A,\\
\label{eq2.3}
\partial\phi_2(a_1,a_2,a_3) &= a_1\phi_2(a_2,a_3) -  \phi_2(a_1a_2,a_3) + \phi_2(a_1,a_2a_3) + \phi_2(a_1,a_2)a_3,\qquad a_i\in A,
\end{align}
An algebraic calculation gives $\partial\partial = 0$. Cocycles are those maps $\phi$ for which $\partial\phi= 0$, while coboundaries are maps of the form $\partial\xi$. The $n^{\text{th}}$ cohomology group $H^n(A,V)$ is then the quotient space of $n$-cocycles modulo $n$-coboundaries. In particular, $H^1(A,V)$ is the space of derivations modulo inner derivations. Since we plan to prove that certain second cohomology groups are zero, this amounts to showing that each 2-cocycle is a 2-coboundary. There is a considerable theory of cohomology, much of which is summarized in \cite{SSbook}. We use this monograph as our standard reference, but include some results below which are not to be found there. The first two of these concern complete boundedness, the second of which is a small extension of the factor case  of \cite{CS3} (the results of this paper appear in \cite{SSbook}).

\begin{lem}\label{lem2.1}
Let $M\subseteq B(H)$ and $S\subseteq B(K)$ be von Neumann algebras with $S$ hyperfinite. If $\phi\colon \ M\ovl\otimes S\to B(H\otimes_2 K)$ is bounded, normal and $(I\otimes S)$-modular, then $\phi$ is completely bounded.
\end{lem}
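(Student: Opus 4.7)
The plan is to follow the strategy of the factor case in [CS3] with adjustments for the general hyperfinite $S$. The argument has three main ingredients: a reduction via modularity, a matrix-unit computation on finite-dimensional subalgebras of $S$, and an ultraweak approximation using normality of $\phi$ and hyperfiniteness of $S$.

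First, I would exploit the modularity to understand the structure of $\phi(M\otimes I)$. Since $a\otimes I$ and $I\otimes s$ commute in $M\ovl\otimes S$ for $a\in M$ and $s\in S$, modularity yields $(I\otimes s)\phi(a\otimes I)=\phi(a\otimes I)(I\otimes s)$, so $\phi(M\otimes I)\subseteq (I_H\otimes S)'=B(H)\ovl\otimes S'$. The values of $\phi$ on $M\ovl\otimes S$ are then recovered from $\phi(a\otimes I)$ and the action of $I\otimes S$ on the right (and left) by modularity.

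Next, since $S$ is hyperfinite, fix an increasing net $\{S_\alpha\}$ of finite-dimensional $*$-subalgebras of $S$ with $\overline{\bigcup_\alpha S_\alpha}^{\text{w}\ast}=S$. Each $S_\alpha$ is a direct sum of full matrix algebras $\bigoplus_k M_{n_k}$, and by decomposing along the central projections of $S_\alpha$ and then choosing matrix units in each block, it suffices to bound the cb-norm of $\phi$ restricted to $M\ovl\otimes M_{n}$ for a full matrix summand. On such a summand, write any element as $\sum_{i,j}a_{ij}\otimes e_{ij}$. Modularity gives
\begin{equation*}
\phi(a\otimes e_{ij})=(I\otimes e_{i1})\,\phi(a\otimes e_{11})\,(I\otimes e_{1j}),
\end{equation*}
and $\phi(a\otimes e_{11})$ lies in the corner $(I\otimes e_{11})B(H\otimes_2 K)(I\otimes e_{11})$. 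A direct computation, following the matrix-unit estimate at the heart of the CS3 argument, bounds $\|\phi|_{M\ovl\otimes S_\alpha}\|_{cb}$ by $\|\phi\|$ (uniformly in $\alpha$).

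Finally, I would use the normality of $\phi$ to pass to the limit: every amplification $\phi^{(m)}$ is normal, hence ultraweakly continuous, and by Kaplansky density combined with the ultraweak density of $\bigcup_\alpha M_m(M\ovl\otimes S_\alpha)$ in $M_m(M\ovl\otimes S)$, the uniform estimate $\|\phi^{(m)}\!\!\mid_{M_m(M\ovl\otimes S_\alpha)}\|\le \|\phi\|$ from the previous step upgrades to $\|\phi^{(m)}\|\le \|\phi\|$ for every $m$. Hence $\|\phi\|_{cb}\le \|\phi\|$.

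The main obstacle is the matrix-unit estimate in Step 2: one has to use the $(I\otimes S)$-modularity of $\phi$ (not merely its boundedness on the finite-dimensional piece) to capture the precise tensorial form on each $M\ovl\otimes S_\alpha$, and to extract a cb bound controlled by $\|\phi\|$ rather than by something larger (such as $\|\phi\|$ times $\dim S_\alpha$). Once the finite-dimensional step yields the right constant, the approximation argument is standard.
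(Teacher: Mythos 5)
Your overall architecture (modularity reduction, matrix-unit analysis on finite-dimensional pieces of $S$, then normality plus Kaplansky density to pass to $M\ovl\otimes S$) matches the shape of the paper's argument, and the first and last steps are fine. But the middle step contains a genuine gap, and it is exactly the step you flag as ``the main obstacle'': the claim that a ``direct computation'' with the matrix units of a fixed finite-dimensional $S_\alpha$ bounds $\|\phi|_{M\ovl\otimes S_\alpha}\|_{cb}$ by $\|\phi\|$. Your identity $\phi(a\otimes e_{ij})=(I\otimes e_{i1})\phi(a\otimes e_{11})(I\otimes e_{1j})$ reduces everything to the single corner map $\psi_{11}(a)=\phi(a\otimes e_{11})$, and the cb norm of $\phi$ on $M\ovl\otimes{\bb M}_n$ is then controlled by the cb norm of $\psi_{11}$ --- but bounding the $m$-th amplification of $\psi_{11}$ for $m$ larger than the block size $n$ cannot be done with data internal to $S_\alpha$: a bounded map into $B(H\otimes_2 K)$ is not automatically completely bounded, and the matrix units of one block of $S_\alpha$ only control amplifications up to size $n$. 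So the asserted uniform cb bound does not follow from what you have written; the constant is not the issue, the existence of the bound is.

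The missing mechanism, which is the heart of the paper's proof, is to let the matrix subfactors range over all of $S$ rather than staying inside one $S_\alpha$: writing $\psi=\phi|_{M\otimes I}$, modularity forces $\psi$ to map into $(I\otimes S)'$, and for a unital matrix subfactor ${\bb M}_n\subseteq S$ the restriction $\phi|_{M\otimes{\bb M}_n}$ factors as $\pi_n\circ(\psi\otimes\mathrm{id}_n)$ with $\pi_n\colon S'\otimes{\bb M}_n\to C^*(S',{\bb M}_n)$ a $*$-isomorphism. Since $S$ contains such subfactors for every $n$, the single bound $\|\phi\|$ dominates $\|\psi_n\|$ for \emph{every} $n$, which is what makes $\psi$ completely bounded. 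Hyperfiniteness then enters through the Effros--Lance product $*$-homomorphism $\rho\colon S'\otimes_{\min}S\to C^*(S',S)$, giving $\phi|_{M\otimes_{\min}S}=\rho\circ(\psi\otimes\mathrm{id}_S)$, completely bounded; your final Kaplansky/normality step then finishes as in the paper. If you reorganize Step 2 so that the arbitrarily large matrix subfactors of $S$ (not of $S_\alpha$) are what feed the amplifications of $\psi$, your proof closes up and becomes essentially the paper's.
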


\begin{proof}
We regard $M$ and $S$ as both represented on $B(H\otimes_2 K)$. The $(I\otimes S)$-modularity implies that the restriction $\psi$ of $\phi$ to $M\otimes I$ maps into $S'$, so $\phi$ maps the minimal tensor product $M \otimes_{\text{min}} S$ into $C^*(S',S)$. Now $S$ contains arbitrarily large matrix subfactors ${\bb M}_n$, $n\ge~1$, and $\phi|_{M\otimes {\bb M}_n}$ can be regarded as the composition of $\psi\otimes \text{id}_n\colon \ M\otimes {\bb M}_n\to S'\otimes {\bb M}_n$ with a $*$-isomorphism $\pi_n\colon \ S'\otimes {\bb M}_n \to C^*(S', {\bb M}_n)$. The uniform bound $\|\phi\|$ on each of these restrictions then shows that $\psi$ is completely bounded. Hyperfiniteness of $S$ gives a $*$-homomorphism $\rho\colon\ S'\otimes_{\min} S\to C^*(S',S)$ defined on elementary tensors by $s'\otimes s\mapsto s's$, \cite[Proposition 4.5]{EL}, so $\phi|_{M\otimes_{\min} S}$ is the composition $\rho \circ (\psi\otimes \text{id}_S)$, showing complete boundedness on $M\otimes_{\min} S$. The same conclusion on $M \ovl\otimes S$ now follows from normality of $\phi$ and the Kaplansky density theorem applied to $M \otimes_{\min} S \otimes {\bb M}_n \subseteq M\ovl\otimes S\otimes {\bb M}_n$.
\end{proof}

The proof that we have given of this result relies on normality and hyperfiniteness, and it would be interesting to know if it holds without these restrictions. The next result is known for factors \cite{CS3}, but does not appear to be in the literature in the generality that we require.

\begin{lem}\label{lem2.2}
Let $M$ and $S$ be type {\rm II}$_1$ von Neumann algebras with $S$ hyperfinite, and let $R\subseteq M$ be a hyperfinite von Neumann subalgebra with $R'\cap M  = Z(M)$. Let $\phi\colon \ (M\ovl\otimes S) \times (M\ovl \otimes S) \to M\ovl\otimes S$ be a bounded separately normal bilinear map which is $R\ovl\otimes S$-multimodular. Then $\phi$ is completely bounded.
\end{lem}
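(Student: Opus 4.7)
The plan is to reduce to the factor case established by Christensen--Sinclair \cite{CS3} via a direct integral decomposition over the centre of $M\ovl\otimes S$.

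Structural preparation: first I would check that the pair $(M\ovl\otimes S,\,R\ovl\otimes S)$ is the non-factor analogue of the CS3 configuration. The tensor product $R\ovl\otimes S$ is hyperfinite, and the commutation theorem gives $(R\ovl\otimes S)'=R'\ovl\otimes S'$; any element of this algebra lying in $M\ovl\otimes S$ must commute with $I\otimes S$, so it lies in $M\ovl\otimes Z(S)$, and then also commuting with $R\otimes I$ forces it into $(R'\cap M)\ovl\otimes Z(S)=Z(M)\ovl\otimes Z(S)=Z(M\ovl\otimes S)$. Thus $(R\ovl\otimes S)'\cap(M\ovl\otimes S)=Z(M\ovl\otimes S)$, and by replacing $R$ with $R\vee Z(M)$ if necessary (which is again hyperfinite because $Z(M)$ is abelian and commutes with $R$, and still satisfies the same relative commutant condition) we may arrange $Z(M\ovl\otimes S)\subseteq R\ovl\otimes S$, so that $\phi$ is in particular $Z(M\ovl\otimes S)$-multimodular.

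Disintegration: after reducing to the separable predual case by the standard techniques of \cite[\S 6.5]{SSbook}, I would decompose $M\ovl\otimes S=\int^{\oplus}F_z\,d\lambda(z)$ and $R\ovl\otimes S=\int^{\oplus}T_z\,d\lambda(z)$ as direct integrals over $Z(M\ovl\otimes S)$, with each $F_z$ a type II$_1$ factor and each $T_z\subseteq F_z$ hyperfinite with $T_z'\cap F_z=\bb C$ (the fibre of the relative commutant being the relative commutant of the fibre). The $Z(M\ovl\otimes S)$-multimodularity lets $\phi$ disintegrate into a measurable field $\{\phi_z\}$ of bounded separately normal $T_z$-multimodular bilinear maps on the fibres. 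Applying the factor case \cite{CS3} fibrewise gives $\|\phi_z\|_{cb}\le K\|\phi_z\|$ for a uniform constant $K$, and reassembling yields $\|\phi\|_{cb}\le K\|\phi\|$.

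The main obstacle I anticipate is the measurable selection work needed to disintegrate a separately normal bilinear map and to check that the fibrewise cb bound is essentially bounded; separate normality in each variable has to be shown to pass to almost every fibre, and the multimodularity of the fibres must be extracted from the global multimodularity together with the central decomposition. A particularly delicate point is ensuring that the enlargement $R\mapsto R\vee Z(M)$ preserves hyperfiniteness in the form needed at each fibre, which is routine but requires careful bookkeeping. As a backup, if the direct integral route becomes intractable I would adapt the Christensen--Sinclair matrix-unit estimate directly to the relative setting, replacing the scalar trace by the centre-valued trace and verifying that every manipulation performed over hyperfinite matrix subfactors of $R\ovl\otimes S$ remains $Z(M\ovl\otimes S)$-linear throughout.
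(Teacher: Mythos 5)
Your structural preparation is fine (the computation $(R\ovl\otimes S)'\cap(M\ovl\otimes S)=Z(M\ovl\otimes S)$ is correct), but the route you then take is genuinely different from the paper's and the two steps you yourself flag as delicate are, in fact, where the whole proof would have to live. The paper does not disintegrate anything. It first invokes the row--column estimate
\[
\Bigl\|\sum_{i=1}^n\phi(x_i,y_i)\Bigr\|\le 2\|\phi\|\,\Bigl\|\sum_{i=1}^n x_ix_i^*\Bigr\|^{1/2}\Bigl\|\sum_{i=1}^n y_i^*y_i\Bigr\|^{1/2}
\]
from \cite[Lemma 5.4.5 (ii)]{SSbook} --- this is where hyperfiniteness of $R\ovl\otimes S$ and the relative commutant hypothesis enter --- and then upgrades this row--column bound to a full cb bound using the fact that $S$, having no finite dimensional representations, norms $M\ovl\otimes S$ by \cite[Proposition 3.4]{PSS}: for $X,Y\in{\bb M}_n(M\ovl\otimes S)$ one computes $\|\phi_n(X,Y)\|$ as a supremum over contractive rows $R$ and columns $C$ with entries in $I\ovl\otimes S$, slides $R$ and $C$ inside $\phi_n$ by modularity, and applies the displayed estimate to the row $RX$ and the column $YC$. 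This argument needs no separability and no central decomposition.

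The concrete gaps in your proposal are these. First, the lemma carries no separability hypothesis, and your reduction to the separable-predual case is not routine: the condition $R'\cap M=Z(M)$ (and hence $T_z'\cap F_z=\bb C$ fibrewise) does not obviously pass to separable subalgebras produced by an exhaustion argument, so the direct integral machinery is not even available until that is resolved. Second, the disintegration of a bounded separately normal \emph{bilinear} map over the centre, the measurability and essential boundedness of the fibre cb norms, and the identification of the fibre of the relative commutant with the relative commutant of the fibre are exactly the measurable-selection obstructions that the norming technique of \cite{PSS} was introduced to circumvent; none of this is off-the-shelf, and you give no mechanism for it. Your backup plan --- redoing the Christensen--Sinclair matrix-unit estimate with the centre-valued trace --- is essentially what \cite[Lemma 5.4.5]{SSbook} already records, and once you add the norming step over $I\ovl\otimes S$ it becomes the paper's proof; I would pursue that rather than the direct integral.
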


\begin{proof}
Lemma 5.4.5 (ii) of \cite{SSbook} yields the inequality
\begin{equation}\label{eq2.4}
 \left\|\sum^n_{i=1} \phi(x_i,y_i)\right\|\le 2\|\phi\| \left\|\sum^n_{i=1} x_ix^*_i\right\|^{1/2} \left\|\sum^n_{i=1} y^*_iy_i\right\|^{1/2}
\end{equation}
for arbitrary finite sets of elements $x_i,y_i\in M\ovl\otimes S$, $1\le i\le n$. If $\phi_n$ denotes the $n^{\text{th}}$ amplification of $\phi$ to $M\ovl\otimes S\otimes {\bb M}_n$, then \eqref{eq2.4} says that
\begin{equation}\label{eq2.5}
 \|\phi_n(R,C)\| \le 2\|\phi\| \|R\|\|C\|
\end{equation}
for operators $R$ and $C$ in the respective row and column spaces $\text{Row}_n(M\ovl\otimes S)$ and $\text{Col}_n(M\ovl\otimes S)$. Now $S$ contains arbitrarily large matrix subfactors and so has no finite dimensional representations. From \cite[Proposition 3.4]{PSS}, $S$ norms $M\ovl\otimes S$. Thus, for each pair $X,Y \in {\bb M}_n(M\ovl\otimes S)$,
\begin{align}
\|\phi_n(X,Y)\| &= \sup\{\|R\phi_n(X,Y)C\|\colon \ R\in \text{Row}_n(I\ovl\otimes S), C\in \text{Col}_n(I \ovl\otimes S), \|R\|, \|C\|\le 1\}\notag\\
&= \sup\{\|\phi_n(RX,YC)\|\colon \ R\in \text{Row}_n(I\ovl\otimes S), C\in \text{Col}_n(I\ovl\otimes S), \|R\|, \|C\|\le 1\}\notag\\
\label{eq2.6}
&\le 2\|\phi\| \|X\| \|Y\|,
\end{align}
where the first equality uses $(I\ovl\otimes S)$-modularity and the final inequality is \eqref{eq2.5} applied to the row $RX$ and the column $YC$. Since $n$ was arbitrary in \eqref{eq2.6}, complete boundedness of $\phi$ is established by this inequality.
\end{proof}

In \cite{Rd}, it was shown that, for von Neumann algebras $M\subseteq B(H)$, every derivation $\delta\colon \ M\to B (H)$ is automatically bounded and ultraweakly continuous. We will require two further facts about derivations which we quote from the work of Christensen in the next two lemmas. In the first one, our statement is extracted from the proof of $[4\Rightarrow 2]$ in the referenced theorem. 

\begin{lem}[Theorem 3.1 in \cite{C2}]\label{lem2.3}
Each completely bounded derivation $\delta\colon \ M\to B(H)$ is inner and is implemented by an operator in $B(H)$.
\end{lem}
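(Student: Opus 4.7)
The plan is to convert $\delta$ into a bounded homomorphism by the standard $2\times 2$ matrix trick, apply the similarity theorem for completely bounded homomorphisms to replace it by a $\ast$-representation, and then read off the implementing operator from an invariant subspace argument.

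First I would define $\pi\colon M\to \bb{M}_2(B(H))$ by
\[
\pi(x) = \begin{pmatrix} x & \delta(x) \\ 0 & x \end{pmatrix}.
\]
The derivation identity $\delta(xy) = x\delta(y) + \delta(x)y$ is exactly what is needed for $\pi(x)\pi(y) = \pi(xy)$, so $\pi$ is a unital homomorphism; since the diagonal blocks form a $\ast$-homomorphism and the off-diagonal entry is $\delta$, complete boundedness of $\delta$ transfers to complete boundedness of $\pi$. I would then invoke the similarity theorem of Haagerup (cf.\ Paulsen's formulation for cb homomorphisms of $C^*$-algebras), which supplies an invertible $S\in \bb{M}_2(B(H))$ such that $\rho(x) := S\pi(x)S^{-1}$ is a $\ast$-representation of $M$ on $H\oplus H$.

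To extract the implementing operator, observe that $H\oplus 0$ is invariant under $\pi(M)$ because each $\pi(x)$ is upper triangular, so $S(H\oplus 0)$ is invariant under $\rho(M)$. Being invariant under a $\ast$-representation, $S(H\oplus 0)$ is actually reducing, so its orthogonal complement is again $\rho(M)$-invariant; pulling back through $S^{-1}$ produces a closed $\pi(M)$-invariant subspace $K$ that is algebraically complementary to $H\oplus 0$. By the closed graph theorem, any such complement has the form $K = \{(Tk,k):k\in H\}$ for some $T\in B(H)$, and invariance of this graph under $\pi(x)$ forces $xTk + \delta(x)k = T(xk)$ for all $k\in H$ and $x\in M$. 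Rearranging gives $\delta(x) = Tx - xT$ for every $x \in M$, exhibiting $\delta$ as inner and implemented by an operator in $B(H)$.

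The main obstacle, and the only substantive input, is the similarity theorem for cb homomorphisms; once that is in hand the invariant subspace argument proceeds as above, and the remainder is routine $2\times 2$ bookkeeping.
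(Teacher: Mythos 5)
Your argument is correct, but note that the paper supplies no proof of this lemma: it is quoted from Christensen \cite{C2}, with the statement extracted from the proof of the implication $[4\Rightarrow 2]$ of Theorem 3.1 there. Your route --- the off-diagonal $2\times 2$ trick converting the completely bounded derivation into a completely bounded unital homomorphism (one should remark that $\delta(1)=\delta(1\cdot 1)=2\delta(1)$ forces $\delta(1)=0$, so $\pi$ really is unital), followed by the Haagerup--Paulsen similarity theorem and the graph-subspace argument --- is the standard modern proof, and every step checks out: $\|\pi\|_{cb}\le 1+\|\delta\|_{cb}$ since $\pi$ is the sum of the diagonal $*$-homomorphism and the corner map built from $\delta$; the similarity $S$ exists because $M$ is a $C^*$-algebra and $\pi$ is unital and completely bounded; $K=S^{-1}\bigl((S(H\oplus 0))^{\perp}\bigr)$ is a closed $\pi(M)$-invariant topological complement of $H\oplus 0$; and the open mapping theorem gives the bounded $T$ with $K=\{(Tk,k):k\in H\}$, whence $\delta(x)=Tx-xT$. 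The only substantive external input, the similarity theorem for completely bounded homomorphisms, is proved via the representation theorem for completely bounded maps and is independent of the statement at hand, so there is no circularity. Mechanically this is the same idea as Christensen's cited implication (similarity of homomorphisms forcing innerness of derivations), though his 1982 argument is organized differently and predates Haagerup's similarity theorem; for the purposes of this paper the conclusion you reach --- implementation by some operator in $B(H)$, which is later pushed into a hyperfinite subalgebra by a conditional expectation in Lemma \ref{lem3.2}~(i) --- is exactly what is needed.
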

%[$\lbrack$\cite{C2}, Theorem 3.1$\rbrack$]

\begin{lem}[special case of Theorem 5.1 in \cite{C1}]\label{lem2.4}
If $M\subseteq N$ is an inclusion of finite von Neumann algebras, then each derivation $\delta\colon \ M\to N$ is inner and is implemented by an element of $N$.
\end{lem}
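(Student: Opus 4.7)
The plan is to adapt the classical Kadison--Sakai fixed point argument to the inclusion setting. By Ringrose's automatic continuity theorem \cite{Rd}, $\delta$ is bounded and ultraweakly continuous, which I assume throughout. For each unitary $u \in \mathcal{U}(M)$ I define an affine self-map $\alpha_u \colon N \to N$ by
\[
\alpha_u(X) = uXu^* + \delta(u)u^*, \qquad X \in N.
\]
A direct computation using $\delta(uv) = u\delta(v) + \delta(u)v$ yields the cocycle identity $\alpha_u \circ \alpha_v = \alpha_{uv}$, so the uniformly norm-bounded orbit $\{\delta(u)u^* \colon u \in \mathcal{U}(M)\} = \{\alpha_u(0)\}$ is permuted by this action (indeed $\alpha_u(\delta(v)v^*) = \delta(uv)(uv)^*$). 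A common fixed point $T \in N$ for the family $\{\alpha_u\}$ satisfies $\delta(u) = Tu - uT$ for every unitary $u \in M$, which by linearity and ultraweak continuity extends to $\delta = [T,\,\cdot\,]$ on all of $M$.

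The heart of the argument is the construction of such a fixed point. Suppose first that $N$ admits a faithful normal tracial state $\tau$. The ultraweakly closed convex hull $K$ of the orbit lies in the norm ball of radius $\|\delta\|$ in $N$, and since ultraweak and $\|\cdot\|_2$ topologies coincide on norm-bounded convex subsets, $K$ is a $\|\cdot\|_2$-closed convex subset of $L^2(N,\tau)$. Because $X \mapsto uXu^*$ is an $L^2$-isometry, each $\alpha_u$ is an affine $L^2$-isometry preserving $K$. The Hilbert space projection theorem produces a unique element $T_0 \in K$ of minimum $\|\cdot\|_2$-norm; since $\alpha_u(T_0) \in K$ has the same $L^2$-norm, uniqueness forces $\alpha_u(T_0) = T_0$ for every $u$. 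This $T_0$ lies in $K \subseteq N$ with $\|T_0\| \le \|\delta\|$ and is the required implementing element.

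The main obstacle is that a general finite von Neumann algebra need not carry a faithful normal tracial state (for instance $\ell^\infty(\Gamma)$ with $\Gamma$ uncountable). I circumvent this by a standard central decomposition. Using the center-valued trace and any nonzero normal state on $Z(N)$, one shows that every finite $N$ admits a nonzero central projection $p$ with $Np$ carrying a faithful normal tracial state; a Zorn's lemma argument then yields a maximal pairwise orthogonal family $\{p_\lambda\} \subseteq Z(N)$ with this property, and maximality forces $\sum_\lambda p_\lambda = I$. Since each $p_\lambda$ is central in $N$ and therefore commutes with $M$, the assignment $x \mapsto \delta(x)p_\lambda$ defines a derivation from $M$ into $Np_\lambda$ of norm at most $\|\delta\|$. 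The previous paragraph applied in each summand yields $T_\lambda \in Np_\lambda$ with $\|T_\lambda\| \le \|\delta\|$ implementing the component derivation, and since the $T_\lambda$ have pairwise orthogonal central supports the ultraweak sum $T = \sum_\lambda T_\lambda \in N$ is well defined and implements $\delta$ on all of $M$.
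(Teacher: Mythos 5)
You are proving a statement that the paper itself only quotes from Christensen \cite{C1} without proof, so there is no in-paper argument to compare against; judged on its own terms, your proposal has one genuine gap, located precisely at the fixed-point step. The maps $\alpha_u(X)=uXu^*+\delta(u)u^*$ are affine $\|\cdot\|_2$-\emph{isometries} in the sense that they preserve distances, since $\alpha_u(X)-\alpha_u(Y)=u(X-Y)u^*$; they do not preserve norms, because they do not fix the origin. Hence the assertion that $\alpha_u(T_0)$ has the same $L^2$-norm as $T_0$ is false in general, and the element of $K$ of minimal $\|\cdot\|_2$-norm need not be a common fixed point. Concretely, take $M=N={\bb M}_2$ with normalized trace $\tau$ and $\delta=\mathrm{ad}(e_{11})$, so that $\delta(u)u^*=e_{11}-ue_{11}u^*$ and $K=e_{11}-\{x\colon 0\le x\le I,\ \tau(x)=\tfrac12\}$. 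Then $0\in K$ is the unique element of minimal norm, yet $0$ implements only the zero derivation and is moved by $\alpha_u$ for any $u$ not commuting with $e_{11}$; the actual common fixed point in $K$ is $e_{11}-\tfrac12 I$, which has strictly positive $\|\cdot\|_2$-norm.

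The repair is standard and leaves the rest of your argument intact: instead of the point of $K$ nearest the origin, take the Chebyshev center of $K$, namely the unique minimizer $c\in K$ of $r(x)=\sup_{y\in K}\|x-y\|_2$. Existence and uniqueness of $c$ follow from the parallelogram law (uniform convexity of $L^2$), and since each $\alpha_u$ maps $K$ \emph{onto} $K$ (its inverse is $\alpha_{u^*}$, because $\alpha_u\alpha_v=\alpha_{uv}$ and $\alpha_I=\mathrm{id}$) while preserving $\|\cdot\|_2$-distances, one gets $r(\alpha_u(c))=r(c)$ and hence $\alpha_u(c)=c$. Everything else in your proposal is sound: the cocycle identity, the passage from a common fixed point $T$ to $\delta=[T,\,\cdot\,]$ on all of $M$ via the linear span of the unitaries, the identification of the ultraweakly compact convex set $K$ with a closed bounded convex subset of $L^2(N,\tau)$, and the reduction of a general finite $N$ to the tracial case by an orthogonal family of central projections (which lie in $N'\cap N\subseteq M'$, so the cut-down maps are again derivations and the implementing elements patch together with norm at most $\|\delta\|$).
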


\section{Separable algebras}\label{sec3}

\indent 

In this section we will prove the vanishing of second cohomology for tensor products of type II$_1$ von Neumann algebras under the additional hypothesis that each algebra is separable. In this context, separability of a von Neumann algebra means the existence of a countable ultraweakly dense subset or, equivalently, a faithful normal representation on a separable Hilbert space. If $M$ is a separable type II$_1$ factor, then it was shown in \cite{Po} that $M$ has a maximal abelian subalgebra (masa) $A$ and a hyperfinite subfactor $R$ such that $A \subseteq R\subseteq M$ and $R'\cap M = {\bb C}1$. This was generalized to separable type II$_1$ von Neumann algebras in \cite{SSsc} with the modifications that $R$ is now a hyperfinite von Neumann subalgebra and that $R'\cap M$ is now the center $Z(M)$. Separability is essential for these results and this is the reason for restricting to separable algebras in this section. Throughout we assume that $M$ and $N$ are separable type II$_1$ von Neumann algebras with respective centers $Z(M)$ and $Z(N)$. We fix choices of masas $A$ and $B$ and hyperfinite subalgebras $R$ and $S$ so that
\begin{equation}\label{eq3.1}
 A \subseteq R \subseteq M,\quad B \subseteq S \subseteq N,
\end{equation}
and
\begin{equation}\label{eq3.2}
 R' \cap M = Z(M),\quad S'\cap N  = Z(N).
\end{equation}
We also note the trivial fact that centers are always contained in masas.

We wish to consider a bounded 2-cocycle $\phi\colon \ (M\ovl\otimes N) \times (M\ovl\otimes N)\to M\ovl\otimes N$ and show that it is a coboundary. The general reduction results of \cite[Ch.~3]{SSbook} allow us to impose the following extra conditions on $\phi$:
\begin{itemize}
 \item[(C1)] $\phi$ is separately normal in each variable;
\item[(C2)] $\phi(x,y)=0$ whenever $x$ or $y$ lies in $R\ovl\otimes S$;
\item[(C3)] $\phi$ is $R\ovl\otimes S$-multimodular. 
\end{itemize}
The latter condition is a consequence of (C2), from \cite[Lemma 3.2.1]{SSbook}, so (C2) is a slightly stronger requirement.
We begin by making a further reduction. 

\begin{lem}\label{lem3.1}
 Let $\phi$ be a 2-cocycle on $M\ovl\otimes N$ which satisfies conditions (C1)--(C3). Then  $\phi$ is equivalent to a 2-cocycle $\psi$ on $M\ovl\otimes N$ satisfying (C1)--(C3) and the additional condition
\begin{equation}\label{eq3.3}
 \psi(m_1\otimes I, m_2\otimes I) = \psi(I\otimes n_1, I\otimes n_2) = 0
\end{equation}
for $m_1,m_2\in M,\  n_1,n_2\in N$.
\end{lem}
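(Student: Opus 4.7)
The plan is to kill $\phi$ first on $(M\otimes I)^2$ and then on $(I\otimes N)^2$ by subtracting coboundaries of two carefully chosen normal maps on $M\ovl\otimes N$. First I would restrict $\phi$ to $(M\ovl\otimes S)\times (M\ovl\otimes S)$: this gives a separately normal, $R\ovl\otimes S$-multimodular 2-cocycle with values in $M\ovl\otimes N$ that vanishes on $R\ovl\otimes S$. The proof of Lemma \ref{lem2.2} applies verbatim when the range is enlarged from $M\ovl\otimes S$ to $M\ovl\otimes N$, since $I\ovl\otimes S$ still norms the larger algebra by \cite[Prop.~3.4]{PSS}, so this restriction is completely bounded. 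The Christensen--Effros--Sinclair vanishing of completely bounded second cohomology, combined with the reduction machinery of \cite[Ch.~3]{SSbook}, then supplies a normal completely bounded map $\xi\colon M\ovl\otimes S \to M\ovl\otimes N$ with $\partial\xi = \phi|_{(M\ovl\otimes S)^2}$ that can be arranged to be $R\ovl\otimes S$-modular and to vanish on $R\ovl\otimes S$.

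An elementary calculation with the $R\ovl\otimes S$-multimodularity of $\phi$ shows that $\phi(m_1\otimes I, m_2\otimes I)$ commutes with each $I\otimes s$, $s\in S$, hence lies in $(I\ovl\otimes S)' \cap (M\ovl\otimes N) = M\ovl\otimes Z(N)$. Composing $\eta_M := \xi|_{M\otimes I}$ with the normal conditional expectation $E := \text{id}_M \ovl\otimes E_{Z(N)}\colon M\ovl\otimes N \to M\ovl\otimes Z(N)$ then produces a normal, $R$-modular map (still denoted $\eta_M$) with values in $M\ovl\otimes Z(N)$, satisfying $\eta_M|_R = 0$ and $\partial\eta_M(m_1,m_2) = \phi(m_1\otimes I, m_2\otimes I)$. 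The crucial consequence is $\eta_M(M) \subseteq M\ovl\otimes Z(N) \subseteq (I\ovl\otimes N)'$, which makes the rule $\tilde\eta_M(m\otimes n) := \eta_M(m)(I\otimes n)$ both left and right $I\ovl\otimes N$-modular and lets it extend by separate normality to a normal, $R\ovl\otimes S$-modular map $\tilde\eta_M\colon M\ovl\otimes N \to M\ovl\otimes N$ that vanishes on $R\ovl\otimes S$.

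Set $\phi' := \phi - \partial\tilde\eta_M$; this is a 2-cocycle still satisfying (C1)--(C3), and additionally $\phi'(m_1\otimes I, m_2\otimes I) = 0$. The symmetric argument on the $N$-side then produces a normal $\eta_N\colon N \to Z(M)\ovl\otimes N$ with $\eta_N|_S = 0$ and $\partial\eta_N(n_1,n_2) = \phi'(I\otimes n_1, I\otimes n_2)$, together with its extension $\tilde\eta_N(m\otimes n) := (m\otimes I)\eta_N(n)$. Then $\psi := \phi' - \partial\tilde\eta_N$ is the desired 2-cocycle: by construction $\psi(I\otimes n_1, I\otimes n_2) = 0$, and since $\tilde\eta_N(m\otimes I) = (m\otimes I)\eta_N(I) = 0$ the coboundary $\partial\tilde\eta_N$ vanishes on $(M\otimes I)^2$, so $\psi(m_1\otimes I, m_2\otimes I) = 0$ as well.

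The main hurdle is keeping track of the modularity and vanishing conditions through both reductions. Recentering $\eta_M$ into $M\ovl\otimes Z(N)$ is what makes $\tilde\eta_M$ well-defined, $R\ovl\otimes S$-modular, and vanishing on $R\ovl\otimes S$; without these properties $\phi'$ would not inherit (C2) and (C3), and the second reduction step could not be launched. Once everything is in place, the verification that $\psi$ satisfies (C1)--(C3) together with \eqref{eq3.3} is a routine coboundary calculation.
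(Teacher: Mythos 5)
Your overall strategy is the same as the paper's: restrict $\phi$ to $(M\ovl\otimes S)\times(M\ovl\otimes S)$, use the norming/complete boundedness machinery and the vanishing of completely bounded cohomology to cobound there, extend the cobounding map to all of $M\ovl\otimes N$ while preserving (C1)--(C3), subtract, and repeat on the other tensor factor. The observation that $\phi(m_1\otimes I,m_2\otimes I)\in M\ovl\otimes Z(N)$ and the bookkeeping of modularity and vanishing conditions are all correct and match the paper. There is, however, one genuine gap: the step where you pass from $\eta_M\colon M\to M\ovl\otimes Z(N)$ to $\tilde\eta_M\colon M\ovl\otimes N\to M\ovl\otimes N$ via $\tilde\eta_M(m\otimes n):=\eta_M(m)(I\otimes n)$ and assert that this ``extends by separate normality.'' This formula only defines a linear map on the algebraic tensor product $M\otimes N$, and for a general bounded linear $\eta_M$ the map $\sum_i m_i\otimes n_i\mapsto\sum_i\eta_M(m_i)(I\otimes n_i)$ need not be bounded for the operator norm of $M\ovl\otimes N$ (this is exactly the failure of $\eta_M\otimes\mathrm{id}$ to be bounded for maps that are bounded but not completely bounded). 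Separate normality is a property of bilinear maps and is not the relevant tool here; what you would actually need is that $\eta_M$ is normal and \emph{completely} bounded (which is available, since $\xi$ comes from the cb-cohomology theorem and the conditional expectation is cb), followed by the standard but nontrivial fact that a normal cb map admits a normal tensor extension $\eta_M\ovl\otimes\mathrm{id}_N$, composed with the multiplication $*$-homomorphism collapsing the $Z(N)$ and $N$ legs. As written, the boundedness and normality of $\tilde\eta_M$ on $M\ovl\otimes N$ are unjustified, and the same issue recurs for $\tilde\eta_N$.

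The paper sidesteps this difficulty entirely: its cobounding map $\alpha$ is defined on all of $M\ovl\otimes S$ (after noting via \eqref{eq3.5} that the restricted cocycle actually takes values in $M\ovl\otimes S$), and the extension to $M\ovl\otimes N$ is taken to be $\tilde\alpha=\alpha\circ{\bb E}_{M\ovl\otimes S}$, a composition of globally defined bounded normal maps, so no tensor extension of a linear map is ever required; since ${\bb E}_{M\ovl\otimes S}$ fixes $M\otimes I$, one still gets $\partial\tilde\alpha=\phi$ on $(M\otimes I)^2$. You could repair your argument the same way by replacing $\tilde\eta_M$ with $\xi\circ{\bb E}_{M\ovl\otimes S}$ (or by explicitly invoking complete boundedness of $\eta_M$ and the normal tensor extension theorem), but some such device is needed --- indeed, the difficulty of extending maps from $M\otimes N$ to $M\ovl\otimes N$ is precisely the obstacle the rest of Section \ref{sec3} of the paper is built to overcome, so it cannot be waved away at this stage.
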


\begin{proof}
Multimodularity with respect to $I\otimes S$ shows that
\begin{align}
 (I\otimes s) \phi(m_1\otimes I, m_2\otimes I) &= \phi(m_1\otimes s, m_2\otimes I)
= \phi(m_1 \otimes I, m_2 \otimes s)\notag\\
\label{eq3.4}
&= \phi(m_1\otimes I, m_2\otimes I)(I\otimes s),\qquad m_1,m_2\in M, s\in S,
\end{align}
from which it follows that $\phi(m_1\otimes I, m_2\otimes I) \in (I\otimes S)'\cap (M\ovl\otimes N) = M\ovl\otimes Z(N)$ for all $m_1,m_2\in M$. 
Note that $Z(N)\subseteq B\subseteq S$, and so $M\ovl{\otimes}Z(N)\subseteq M\ovl{\otimes}S$. Since
\begin{equation}\label{eq3.5}
 \phi(m_1\otimes s_1, m_2\otimes s_2) = \phi(m_1\otimes I, m_2\otimes I) (I\otimes s_1s_2)
\end{equation}
for $m_1,m_2\in M, s_1,s_2\in S$, we conclude that $\phi$ maps $(M\ovl\otimes S) \times (M\ovl\otimes S)$ to $M\ovl\otimes S$. Thus the restriction of $\phi$ to $M\ovl\otimes S$ is completely bounded by Lemma \ref{lem2.2}. It follows from \cite{CS3} that there is a normal $(R\ovl\otimes S)$-modular map $\alpha\colon \ M\ovl\otimes S\to M\ovl\otimes S$ such that $\phi|_{M\ovl\otimes S} = \partial\alpha$, and a similar argument gives a normal $(R\ovl\otimes S)$-modular map $\beta\colon \ R\ovl\otimes N \to R\ovl\otimes N$ such that $\phi|_{R\ovl\otimes N} = \partial\beta$. Using the normal conditional expectations  ${\bb E}_{M\ovl\otimes S}$ and ${\bb E}_{R\ovl\otimes N}$ of $M\ovl\otimes N$ onto $M\ovl\otimes S$ and $R\ovl\otimes N$ respectively, we now extend $\alpha$ and $\beta$ to $(R\ovl\otimes S)$-modular maps $\tilde\alpha,\tilde\beta \colon \ M\ovl\otimes N\to M\ovl\otimes N$ by
\begin{equation}\label{eq3.6}
 \tilde\alpha = \alpha \circ {\bb E}_{M\ovl\otimes S},\quad \tilde\beta = \beta\circ {\bb E}_{R\ovl\otimes N}. 
\end{equation}
Now define $\psi = \phi - \partial\tilde\alpha - \partial\tilde\beta$, a 2-cocycle equivalent to $\phi$. We verify the desired properties for $\psi$. Separate normality is clear from the choices of $\alpha$ and $\beta$, so $\psi$ satisfies (C1).

Since $\phi$ satisfies (C2), we have $\phi(I,I) = 0$. Then the cocycle identity
\begin{equation}\label{eq3.7}
I\alpha(I) - \alpha(I) + \alpha(I)I = \phi(I,I) = 0
\end{equation}
gives $\alpha(I)=0$, and modularity then implies that $\tilde\alpha|_{R\ovl\otimes S} = \alpha|_{R\ovl\otimes S}=0$, with a similar result for $\tilde\beta$. A straightforward calculation then shows that $\partial\tilde \alpha(x,y)$ and $\partial\tilde\beta(x,y)$ are both 0 whenever at least one of $x$ and $y$ lies in $R\ovl\otimes S$. Thus $\psi$ satisfies (C2) and hence (C3). It remains to show that \eqref{eq3.3} is satisfied. We consider only the relation $\psi(m_1\otimes I, m_2\otimes I)=0$ for $m_1,m_2\in M$, since the argument for the second is identical.

For $m\in M$,
\begin{equation}\label{eq3.8}
 \tilde\beta(m\otimes I) = \beta({\bb E}_R(m)\otimes I) = 0,
\end{equation}
since $\beta$ vanishes on $R\ovl\otimes S$,  and thus $\partial\tilde\beta|_{M\otimes I}=0$.  Consequently $\psi|_{M\otimes I} = \phi|_{M\otimes I} - \partial\tilde\alpha|_{M\otimes I}$, and we determine the latter term. For $m\in M$,
\begin{equation}\label{eq3.9}
 \tilde\alpha(m\otimes I) = \alpha(m\otimes {\bb E}_S(I)) = \alpha(m\otimes I),
\end{equation}
so
\begin{equation}\label{eq3.10}
 \partial\tilde\alpha(m_1\otimes I, m_2\otimes I) = \partial\alpha(m_1\otimes I, m_2\otimes I) = \phi(m_1 \otimes I, m_2\otimes I), \quad m_1,m_2\in M,
\end{equation}
since $\phi=\partial\alpha$ on $M\ovl\otimes S$. This shows that \eqref{eq3.3} holds.
\end{proof}

In light of this lemma, we may henceforth assume that the 2-cocycle $\phi$ on $M\ovl\otimes N$ not only satisfies (C1)--(C3) but also condition \eqref{eq3.3}. We will need to make use of the basic construction for an inclusion $P\subseteq Q$ of finite von Neumann algebras, where $Q$ has a specified normal faithful trace $\tau$. Then $Q$ acts on the Hilbert space $L^2(Q,\tau)$, which we abbreviate to $L^2(Q)$, and its commutant is $JQJ$,  where $J$ is the canonical conjugation. We will use $J$ for all such conjugations, which should be clear from the context. The Hilbert space projection of $L^2(Q)$ onto $L^2(P)$ is denoted by $e_p$, and the basic construction $\langle Q,e_p\rangle$ is the von Neumann algebra generated by $Q$ and $e_p$. Since $\langle Q,e_p\rangle' = JPJ$, \cite{J}, it is clear that $\langle Q,e_p\rangle$ is hyperfinite precisely when $P$ has this property.

For the inclusions $A\ovl\otimes B \subseteq R \ovl\otimes S \subseteq M\ovl\otimes N$, we obtain an inclusion $\langle M\ovl\otimes N, e_{R\ovl\otimes S}\rangle \subseteq \langle M\ovl\otimes N, e_{A\ovl\otimes B}\rangle$ of hyperfinite von Neumann algebras. Since $J(A\ovl\otimes B)J$ is a masa in $J(M\ovl\otimes N)J$, the general theory of extended cobounding, \cite{KR2} or \cite{SSbook}, allows us to find a bounded $(R\ovl\otimes S)$-modular map $\xi\colon \ M\ovl\otimes N\to \langle M\ovl\otimes N, e_{A\ovl\otimes B}\rangle$ so that $\phi = \partial\xi$. Hyperfiniteness gives a conditional expectation ${\bb E}\colon \ \langle M\ovl\otimes N, e_{A\ovl\otimes B}\rangle \to \langle M\ovl\otimes N, e_{R\ovl\otimes S}\rangle$, and the $(R\ovl\otimes S)$-modular composition $\gamma = {\bb E}\circ\xi\colon \ M\ovl\otimes N\to \langle M\ovl\otimes N, e_{R\ovl\otimes S}\rangle$ also has the property that $\phi = \partial\gamma$. Moreover, the results of \cite{JKR} allow us to further assume that $\gamma$ is normal.

We now introduce three auxiliary linear maps. At the outset these are not obviously bounded, and so can only be defined on the algebraic tensor product $M\otimes N$. We define $f,g\colon \ M\otimes N\to \langle M\ovl\otimes N, e_{R\ovl\otimes S}\rangle$ and $h\colon \ M\otimes N\to M\ovl\otimes N$  on elementary tensors $m\otimes n \in M\otimes N$ by
\begin{align}\label{eq3.11}
 f(m\otimes n) &= \phi(m\otimes I, I\otimes n) + \gamma(m\otimes n),\\
\label{eq3.12}
g(m\otimes n) &= \phi(I\otimes n, m\otimes I) + \gamma(m\otimes n),\\
\label{eq3.13}
h(m\otimes n) &= g(m\otimes n)-f(m\otimes n) = \phi(I\otimes n, m\otimes I) - \phi(m\otimes I, I\otimes n).
\end{align}
The next lemma lists some basic properties of these maps.

\begin{lem}\label{lem3.2}
The following properties hold:
\begin{itemize}
 \item[\rm (i)] The restrictions $\gamma|_{M\ovl\otimes S}$ and $\gamma|_{R\ovl\otimes N}$ are completely bounded derivations, spatially implemented by elements of $\langle M\ovl\otimes N, e_{R\ovl\otimes S}\rangle$.
\item[\rm (ii)] The restrictions $f|_{M\otimes I}$, $f|_{I\otimes N}$, $g|_{M\otimes I}$, and $g|_{I\otimes N}$ are equal to the respective restrictions of $\gamma$ to these subalgebras, and are all bounded derivations spatially implemented by elements of $\langle M\ovl\otimes N, e_{R\ovl\otimes S}\rangle$.
\item[\rm (iii)] The restrictions $h|_{M\otimes I}$ and $h|_{I\otimes N}$ are both 0.
\end{itemize}
\end{lem}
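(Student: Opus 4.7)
The plan is to dispatch (iii) and (ii) by direct computation, and then handle (i) by combining Lemmas \ref{lem2.1} and \ref{lem2.3} with an averaging argument over the basic construction.

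For (iii), condition (C2) immediately gives $\phi(I\otimes I, m\otimes I) = \phi(m\otimes I, I\otimes I) = 0$ since $I\otimes I \in R\ovl\otimes S$, so $h(m\otimes I) = 0$; likewise $h(I\otimes n) = 0$. For (ii), (C2) kills the $\phi$-term in each of the four definitions whenever one tensor factor is $I$, so each of the restrictions $f|_{M\otimes I}$, $f|_{I\otimes N}$, $g|_{M\otimes I}$, $g|_{I\otimes N}$ coincides with the restriction of $\gamma$ to the corresponding subalgebra. Since $M\otimes I \subseteq M\ovl\otimes S$ and $I\otimes N \subseteq R\ovl\otimes N$, the claims in (ii) reduce to (i).

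For (i), I first establish that $\phi$ vanishes on $(M\ovl\otimes S)\times(M\ovl\otimes S)$ and on $(R\ovl\otimes N)\times(R\ovl\otimes N)$. Two applications of $(R\ovl\otimes S)$-multimodularity (C3) yield
\begin{equation*}
\phi(m_1\otimes s_1, m_2\otimes s_2) = \phi(m_1\otimes I, m_2\otimes s_1 s_2) = \phi(m_1\otimes I, m_2\otimes I)(I\otimes s_1 s_2),
\end{equation*}
which vanishes by \eqref{eq3.3}; separate normality (C1) and Kaplansky density extend this to all of $M\ovl\otimes S$, and the argument for $R\ovl\otimes N$ is entirely symmetric. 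Since $\phi=\partial\gamma$, this forces $\gamma|_{M\ovl\otimes S}$ and $\gamma|_{R\ovl\otimes N}$ to be normal, $(R\ovl\otimes S)$-modular derivations into $\mathcal{A} := \langle M\ovl\otimes N, e_{R\ovl\otimes S}\rangle$.

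Viewing $\mathcal{A}$ as a subalgebra of $B(L^2(M)\otimes L^2(N))$, Lemma \ref{lem2.1} (with $M$ acting on $L^2(M)$ and $S$ on $L^2(N)$) gives complete boundedness of $\gamma|_{M\ovl\otimes S}$, and Lemma \ref{lem2.3} then produces $T \in B(L^2(M)\otimes L^2(N))$ with $\gamma(x) = Tx - xT$ for $x \in M\ovl\otimes S$. To finish, I use that $\mathcal{A}$ is hyperfinite (as remarked just before the definition of $\xi$, since $R\ovl\otimes S$ is), hence injective, so there is a conditional expectation $\mathbb{E}$ from $B(L^2(M)\otimes L^2(N))$ onto $\mathcal{A}$. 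The element $T_0 := \mathbb{E}(T)$ lies in $\mathcal{A}$ and still spatially implements $\gamma|_{M\ovl\otimes S}$, by $\mathcal{A}$-bimodularity of $\mathbb{E}$ and the inclusion $M\ovl\otimes S \subseteq \mathcal{A}$ (so $\mathbb{E}(Tx-xT) = T_0 x - xT_0$, while the left side equals $\gamma(x)$ because $\gamma(x)\in \mathcal{A}$). The symmetric argument handles $\gamma|_{R\ovl\otimes N}$. The main obstacle is this last averaging step: Lemma \ref{lem2.3} by itself places the implementing operator only in $B(H)$, and since $\mathcal{A}$ is not finite we cannot invoke Lemma \ref{lem2.4} directly; pulling $T$ back inside the basic construction algebra relies precisely on the injectivity of $\mathcal{A}$ coming from hyperfiniteness.
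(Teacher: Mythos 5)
Your proof is correct and follows essentially the same route as the paper: establish that the relevant restrictions of $\gamma$ are normal $(I\otimes S)$-modular derivations, get complete boundedness from Lemma \ref{lem2.1}, innerness from Lemma \ref{lem2.3}, and then push the implementing operator into $\langle M\ovl\otimes N, e_{R\ovl\otimes S}\rangle$ with a conditional expectation coming from hyperfiniteness. The only (harmless) differences are that you verify (iii) directly from (C2) rather than deducing it from (ii), and you obtain the derivation property of $\gamma|_{M\ovl\otimes S}$ by showing $\phi$ vanishes on $(M\ovl\otimes S)\times(M\ovl\otimes S)$ via modularity and \eqref{eq3.3}, where the paper first treats $M\otimes I$ and then extends by modularity and normality.
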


\begin{proof}
 (i)\qquad We consider only $\gamma|_{M\ovl\otimes S}$, the other case being similar. Since $\phi|_{M\otimes I} = 0$, from \eqref{eq3.3}, and $\phi = \partial\gamma$, we see that $\gamma|_{M\otimes I}$ is a derivation. The $(R\ovl\otimes S)$-modularity then implies that $\gamma|_{M\otimes S}$ is a derivation, with the same conclusion for $\gamma|_{M\ovl\otimes S}$ by normality of $\gamma$. Since $\gamma$ is, in particular, $(I\otimes S)$-modular, complete boundedness of $\gamma|_{M\ovl\otimes S}$ follows from Lemma \ref{lem2.1}. Thus $\gamma|_{M\ovl\otimes S}$ is implemented by an operator $t\in B(L^2(M\ovl\otimes N))$, from Lemma \ref{lem2.3}. By hyperfiniteness of $\langle M\ovl\otimes N, e_{R\ovl\otimes S}\rangle$, there is a conditional expectation ${\bb E}$ of $B(L^2(M\ovl\otimes N))$ onto this subalgebra, and so $\gamma|_{M\ovl\otimes S}$ is also implemented by ${\bb E}(t)\in \langle M\ovl\otimes N, e_{R\ovl\otimes S}\rangle$.

\n (ii)\qquad From \eqref{eq3.11} and (C2),
\begin{align}
 f(m\otimes I) &= \phi(m\otimes I, I\otimes I) + \gamma(m\otimes I)\notag\\
\label{eq3.14}
&= \gamma(m\otimes I),\qquad m\in M,
\end{align}
so $f|_{M\otimes I} = \gamma|_{M\otimes I}$ is a derivation on $M\otimes I$ spatially implemented by an element of $\langle M\ovl\otimes N, e_{R\ovl\otimes S}\rangle$ from (i). The other three restrictions are handled similarly.

\n (iii)\qquad From (ii)
\begin{equation}\label{eq3.15}
 h|_{M\otimes I} = g|_{M\otimes I} - f|_{M\otimes I} = \gamma|_{M\otimes I} - \gamma|_{M\otimes I}  = 0,
\end{equation}
with a similar result for $h|_{I\otimes N}$.
\end{proof}

\begin{pro}\label{pro3.3}
The map $f$ of \eqref{eq3.11} is a derivation on $M\otimes N$.
\end{pro}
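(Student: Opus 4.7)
My approach is to verify the derivation identity $f(xy) = xf(y) + f(x)y$ on elementary tensors $x = m_1\otimes n_1$, $y = m_2\otimes n_2$, extending by bilinearity. Using $\phi = \partial\gamma$, I first rewrite the defining formula $f(m\otimes n) = \phi(m\otimes I,\,I\otimes n) + \gamma(m\otimes n)$ in the more tractable form
\[
f(m\otimes n) = (m\otimes I)\gamma(I\otimes n) + \gamma(m\otimes I)(I\otimes n),
\]
which splits $f$ cleanly into the values of $\gamma$ on $M\otimes I$ and on $I\otimes N$, two subalgebras on which Lemma~\ref{lem3.2}(ii) guarantees that $\gamma$ restricts to a derivation.

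Next, I expand $f(x_1x_2) = f(m_1m_2\otimes n_1n_2)$ using the derivation property of $\gamma|_{M\otimes I}$ applied to $\gamma(m_1m_2\otimes I)$ and of $\gamma|_{I\otimes N}$ applied to $\gamma(I\otimes n_1n_2)$, and in parallel expand $x_1f(x_2) + f(x_1)x_2$, exploiting the commutativity of $m\otimes I$ with $I\otimes n$. The leading and trailing cross-terms $(m_1m_2\otimes n_1)\gamma(I\otimes n_2)$ and $\gamma(m_1\otimes I)(m_2\otimes n_1n_2)$ appear on both sides and cancel, so the discrepancy factors cleanly as
\[
(m_1\otimes I)\bigl\{[m_2\otimes I,\,\gamma(I\otimes n_1)] + [\gamma(m_2\otimes I),\,I\otimes n_1]\bigr\}(I\otimes n_2),
\]
and the bracket in the middle equals $\phi(m_2\otimes I,\,I\otimes n_1) - \phi(I\otimes n_1,\,m_2\otimes I) = -h(m_2\otimes n_1)$ by a direct application of $\phi = \partial\gamma$.

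The main obstacle is thus to show that $h$ vanishes on every elementary tensor, and not merely on $M\otimes I$ and $I\otimes N$ as recorded in Lemma~\ref{lem3.2}(iii). To close this gap I would invoke the spatial implementations supplied by Lemma~\ref{lem3.2}(i): writing $\gamma(m\otimes I) = [t,\,m\otimes I]$ and $\gamma(I\otimes n) = [t',\,I\otimes n]$ for implementers $t, t' \in \langle M\ovl\otimes N,\, e_{R\ovl\otimes S}\rangle$, the Jacobi identity together with $[m\otimes I,\,I\otimes n] = 0$ collapses $h(m\otimes n)$ to the single commutator $[I\otimes n,\,[t - t',\,m\otimes I]]$. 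Since both $t$ and $t'$ implement $\gamma$ on the common domain $R\ovl\otimes S$ where $\gamma$ vanishes by $(R\ovl\otimes S)$-modularity, $t - t'$ commutes with $R\ovl\otimes S$; the map $m\mapsto [t-t',\,m\otimes I]$ is therefore a derivation on $M$ vanishing on $R$, and a final structural argument exploiting $R'\cap M = Z(M)$ together with the symmetric hypothesis on $S$ should force this commutator into the centralizer of $I\otimes N$, yielding $h \equiv 0$ and completing the verification.
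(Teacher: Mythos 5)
Your reduction is sound and coincides with the paper's: the reformulation $f(m\otimes n)=(m\otimes I)\gamma(I\otimes n)+\gamma(m\otimes I)(I\otimes n)$, the expansion of $\partial f$ on elementary tensors, and the identification of the obstruction as $(m_1\otimes I)\,h(m_2\otimes n_1)\,(I\otimes n_2)$ (up to your sign convention) all match equations \eqref{eq3.16}--\eqref{eq3.19}. The gap is in your final step, the proof that $h\equiv 0$. Your Jacobi manipulation giving $h(m\otimes n)=[I\otimes n,[t-t',m\otimes I]]$ is correct, and so is the observation that $u=t-t'$ commutes with $R\ovl\otimes S$. But the "structural argument" you then invoke cannot work: $u$ lives in $\langle M\ovl\otimes N, e_{R\ovl\otimes S}\rangle$, and the relative commutant of $R\ovl\otimes S$ (or of $R\otimes I$) in that algebra is large --- it contains $e_R\otimes e_S$ and $J(M\ovl\otimes N)J$, not merely $Z(M)\ovl\otimes N$. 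Consequently a derivation of $M$ vanishing on $R$ with values in the basic construction need not vanish: $m\mapsto [e_R,m]$ is a nonzero example, and for $u=e_R\otimes e_S$ one gets $[I\otimes n,[u,m\otimes I]]=[e_R,m]\otimes[e_S,n]\ne 0$ in general. So nothing about the position of $t-t'$ relative to $R\ovl\otimes S$ forces $[t-t',m\otimes I]$ into the commutant of $I\otimes N$.

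The missing ingredient is that $h$ takes values in the finite algebra $M\ovl\otimes N$ itself, since $h(m\otimes n)=\phi(I\otimes n,m\otimes I)-\phi(m\otimes I,I\otimes n)$. The paper first shows (via $\partial(\partial f)=0$ applied to the triple $(I\otimes n_1, m_2\otimes I, m_3\otimes I)$, together with $h|_{M\otimes I}=0$) that for fixed $n_1$ the map $m\mapsto h(m\otimes n_1)$ is a derivation of $M\otimes I$ into $M\ovl\otimes N$; your commutator formula yields the same derivation property. The decisive step is then Lemma \ref{lem2.4}: a derivation between finite von Neumann algebras is implemented by an element $a$ of the \emph{target} $M\ovl\otimes N$, and since the derivation vanishes on $R\otimes I$ by (C2), $a\in (R\otimes I)'\cap(M\ovl\otimes N)=Z(M)\ovl\otimes N$, which commutes with $M\otimes I$, giving $h(m\otimes n_1)=0$. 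Replacing your structural argument with this application of Lemma \ref{lem2.4} (in either variable) closes the gap and essentially recovers the paper's proof.
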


\begin{proof}
For $m\in M$ and $n\in N$, Lemma \ref{lem3.2} (ii) gives
\begin{align}
(m\otimes I) f(I\otimes n) + f(m\otimes I)(I\otimes n) &= (m\otimes I) \gamma(I\otimes n) + \gamma(m\otimes I)(I\otimes n)\notag\\
&= [(m\otimes I)\gamma(I\otimes n) - \gamma(m\otimes n) + \gamma(m\otimes I)(I\otimes n)]\notag\\
&\quad  + \gamma(m\otimes n)\notag\\
&= \phi(m\otimes I, I\otimes n) + \gamma(m\otimes n)\notag\\
\label{eq3.16}
&= f(m\otimes n),
\end{align}
using $\phi=\partial\gamma$. A similar calculation leads to
\begin{equation}\label{eq3.17}
 (I\otimes n) g(m\otimes I) + g(I\otimes n)(m\otimes I) = g(m\otimes n).
\end{equation}
We now use \eqref{eq3.16} and \eqref{eq3.17} to calculate $\partial f$ on pairs of elementary tensors, noting that $f$ is a derivation on $M\otimes I$ and $I\otimes N$. For $m_1,m_2\in M$, $n_1,n_2\in N$,
\begin{align}
\partial f(m_1\otimes n_1, m_2\otimes n_2) &= (m_1\otimes n_1) f(m_2\otimes n_2) -f(m_1m_2\otimes n_1n_2) + f(m_1\otimes n_1)(n_2\otimes n_2)\notag\\
&= (m_1\otimes n_1) [(m_2\otimes I) f(I\otimes n_2) + f(m_2\otimes I)(I\otimes n_2)]\notag\\
&\quad - [(m_1m_2\otimes I) f(I\otimes n_1n_2) + f(m_1m_2\otimes I)(I\otimes n_1n_2)]\notag\\
&\quad + [(m_1\otimes I) f(I\otimes n_1) + f(m_1\otimes I)(I\otimes n_1)](m_2\otimes n_2)\notag\\
&= (m_1m_2\otimes n_1) f(I\otimes n_2) + (m_1\otimes n_1) f(m_2\otimes I)(I\otimes n_2)\notag\\
&\quad - (m_1m_2\otimes I) [(I\otimes n_1) f(I\otimes n_2) + f(I\otimes n_1)(I\otimes n_2)]\notag\\
&\quad - [(m_1\otimes I)f(m_2\otimes I) + f(m_1\otimes I)(m_2\otimes I)] (I\otimes n_1n_2)\notag\\
&\quad + (m_1\otimes I) f(I\otimes n_1)(m_2\otimes n_2) + f(m_1\otimes I) (m_2\otimes n_1n_2)\notag\\
&= (m_1m_2\otimes n_1) f(I\otimes n_2) + (m_1\otimes n_1) f(m_2\otimes I)(I\otimes n_2)\notag\\
&\quad - (m_1m_2\otimes n_1) f(I\otimes n_2) - (m_1m_2\otimes I) f(I\otimes n_1)(I\otimes n_2)\notag\\
&\quad - (m_1\otimes I) f(m_2\otimes I)(I\otimes n_1n_2) - f(m_1\otimes I)(m_2\otimes n_1n_2)\notag\\
&\quad + (m_1\otimes I) f(I\otimes n_1)(m_2\otimes n_2) + f(m_1\otimes I)(m_2\otimes n_1n_2)\notag\\
%\intertext{\newpage}
&= (m_1\otimes I) [(I\otimes n_1) f(m_2\otimes I) + f(I\otimes n_1)(m_2\otimes I)] (I\otimes n_2)\notag\\
\label{eq3.18} 
&\quad - (m_1\otimes I) [(m_2\otimes I)f(I\otimes n_1) + f(m_2\otimes I)(I\otimes n_1)] (I\otimes n_2).
\end{align}
Recalling that $f,g$ and $\gamma$ agree on $M\otimes I$ and $I\otimes N$, while $\phi  = \partial\gamma$, \eqref{eq3.18} becomes
\begin{align}
\partial f(m_1\otimes n_1, m_2\otimes n_2) &= (m_1\otimes I) [(I\otimes n_1) \gamma(m_2\otimes I) + \gamma(I\otimes n_1)(m_2\otimes I)] (I\otimes n_2)\notag\\
&\quad - (m_1\otimes I) [(m_2\otimes I) \gamma(I\otimes n_1) + \gamma(m_2\otimes I) (I\otimes n_1)] (I\otimes n_2)\notag\\
&= (m_1\otimes I) [\phi(I\otimes n_1, m_2\otimes I) + \gamma(m_2\otimes n_1)] (I\otimes n_2)\notag\\
&\quad - (m_1\otimes I) [\phi(m_2\otimes I, I\otimes n_1) + \gamma(m_2\otimes n_1)] (I\otimes n_2)\notag\\
&= (m_1\otimes I) [g(m_2\otimes n_1) - f(m_2\otimes n_1)] (I\otimes n_2)\notag\\
\label{eq3.19}
&= (m_1\otimes I) h(m_2\otimes n_1) (I\otimes n_2).
\end{align}
Here we have used the relations \eqref{eq3.11}--\eqref{eq3.13}. Now define $F=\partial f$. Then \eqref{eq3.19} is 
\begin{equation}\label{eq3.20}
 F(m_1\otimes n_1, m_2\otimes n_2) = (m_1\otimes I) h(m_2\otimes n_1)(I\otimes n_2).
\end{equation}
The identity $\partial F=0$ for the triple $(I\otimes n_1, m_2\otimes I, m_3\otimes I)$ yields
\begin{equation}\label{eq3.21}
 (m_2\otimes n_1) h(m_3\otimes I) - (m_2\otimes I) h(m_3\otimes n_1) + h(m_2m_3\otimes n_1) - h(m_2\otimes n_1) (m_3\otimes I)=0,
\end{equation}
and so
\begin{equation}\label{eq3.22}
 h(m_2m_3\otimes n_1) = (m_2\otimes I) h(m_3\otimes n_1) + h(m_2\otimes n_1)(m_3\otimes I)
\end{equation}
since $h|_{M\otimes I}=0$. It follows from \eqref{eq3.22} that, for each fixed $n_1\in N$, the map $\delta(m\otimes I) = h(m\otimes n_1)$, $m\in M$, defines a derivation of $M\otimes I$ into $M\ovl\otimes N$. Since both algebras are finite, $\delta$ is implemented by an element $a\in M\ovl\otimes N$ by Lemma \ref{lem2.4}. For $r\in R$,
\begin{equation}\label{eq3.23}
 \delta(r\otimes I) = h(r\otimes n_1) = \phi(I\otimes n_1, r\otimes I) - \phi(r\otimes I, I\otimes n_1) = 0,
\end{equation}
from \eqref{eq3.13} and (C2). Thus $a\in (R\otimes I)'\cap (M\ovl\otimes N) = Z(M)\ovl\otimes N$, so $a$ commutes with $M\otimes I$. We conclude that $h(m\otimes n_1) = 0$ for $m\in M$. Since $n_1\in N$ was arbitrary, $h=0$ and, from \eqref{eq3.19}, $\partial f= 0$. This shows that $f$ is a derivation on the algebraic tensor product $M\otimes N$.
\end{proof}

\begin{pro}\label{pro3.4}
There exists a bounded normal map $\xi\colon \ M\ovl\otimes N\to M\ovl\otimes N$ such that
\begin{equation}\label{eq3.24}
 \xi(m\otimes n) = \phi(m\otimes I, I\otimes n), \qquad m\in M,\  n\in N.
\end{equation}
\end{pro}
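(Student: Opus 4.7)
The plan is to construct $\xi$ as a difference $\tilde f - \gamma$, where $\tilde f$ is a bounded normal extension to $M\ovl\otimes N$ of the derivation $f$ from Proposition~\ref{pro3.3}. This is motivated by the identity $\xi(m\otimes n) = \phi(m\otimes I, I\otimes n) = f(m\otimes n) - \gamma(m\otimes n)$ built into \eqref{eq3.11}. Since $\gamma$ is already bounded and normal on all of $M\ovl\otimes N$, producing $\xi$ reduces to extending $f$ from the algebraic tensor product $M\otimes N$ to $M\ovl\otimes N$, followed by verifying that the resulting difference maps into $M\ovl\otimes N$ rather than merely into the basic construction.

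Combining Lemma~\ref{lem3.2}(ii) with the derivation identity \eqref{eq3.16} gives, on elementary tensors,
\[
f(m\otimes n) = (m\otimes I)\gamma(I\otimes n) + \gamma(m\otimes I)(I\otimes n),
\]
a bilinear expression that is separately $\sigma$-weakly continuous in $m$ and $n$ and uniformly bounded by $2\|\gamma\|$ on the product of unit balls. By Lemma~\ref{lem3.2}(i) and Lemma~\ref{lem2.3}, choose $\alpha_0, \beta_0 \in \langle M\ovl\otimes N, e_{R\ovl\otimes S}\rangle$ spatially implementing $\gamma|_{M\ovl\otimes S}$ and $\gamma|_{R\ovl\otimes N}$, respectively. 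Expanding the derivations yields
\[
f(m\otimes n) = \alpha_0(m\otimes n) - (m\otimes n)\beta_0 + (m\otimes I)(\beta_0-\alpha_0)(I\otimes n),
\]
and the first two summands extend at once to the bounded normal map $x \mapsto \alpha_0 x - x\beta_0$ defined on all of $M\ovl\otimes N$.

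The main obstacle is the third, bilinear ``sandwich'' term $T(m,n) := (m\otimes I)d(I\otimes n)$ with $d := \beta_0 - \alpha_0$; it must be shown to descend to a bounded $\sigma$-weakly continuous linear map in $m\otimes n$. The relations $\gamma|_{R\ovl\otimes S} = 0$ together with the modularity of $\gamma|_{M\ovl\otimes S}$ and $\gamma|_{R\ovl\otimes N}$ force $d$ to commute with both $R\otimes I$ and $I\otimes S$, and hence with $R\ovl\otimes S$; the $(R\ovl\otimes S)$-modularity of $\phi$ then transfers to a corresponding $R\otimes S$-bimodularity of $T$. Exploiting hyperfiniteness of $R$ and $S$, this modular structure allows one to establish complete boundedness of $T$ via arguments in the spirit of Lemma~\ref{lem2.1}, after which a Fubini/slice-map extension for separately normal completely bounded bilinear maps produces the desired bounded normal linear extension $\tilde T\colon\ M\ovl\otimes N \to \langle M\ovl\otimes N, e_{R\ovl\otimes S}\rangle$.

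Setting $\tilde f(x) := \alpha_0 x - x\beta_0 + \tilde T(x)$ and $\xi := \tilde f - \gamma$, one obtains a bounded normal map from $M\ovl\otimes N$ into the basic construction, with $\xi(m\otimes n) = \phi(m\otimes I, I\otimes n) \in M\ovl\otimes N$ for every elementary tensor. Since $M\otimes N$ is $\sigma$-weakly dense in $M\ovl\otimes N$ and $M\ovl\otimes N$ is $\sigma$-weakly closed inside $\langle M\ovl\otimes N, e_{R\ovl\otimes S}\rangle$, normality of $\xi$ forces $\xi(M\ovl\otimes N) \subseteq M\ovl\otimes N$, yielding the required bounded normal map.
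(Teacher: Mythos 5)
Your reduction to the ``sandwich'' term $T(m,n)=(m\otimes I)d(I\otimes n)$ with $d=\beta_0-\alpha_0$ is a correct reorganization of the identity \eqref{eq3.16}, and the observation that $d$ commutes with $R\ovl\otimes S$ is right (it follows from $\alpha_0$ and $\beta_0$ both implementing $\gamma$ on $R\ovl\otimes S$; the unproved claim $\gamma|_{R\ovl\otimes S}=0$ is not needed). The step that disposes of this term, however, is where the proof breaks down. Complete boundedness of the bilinear map $T$ in the Christensen--Sinclair multilinear sense is automatic for any sandwich map, with cb-norm at most $\|d\|$; what you actually need is that the \emph{linearization} $\sum_i m_i\otimes n_i\mapsto \sum_i (m_i\otimes I)\,d\,(I\otimes n_i)$ is bounded for the operator norm of $M\ovl\otimes N$, and no ``Fubini/slice-map extension'' delivers this. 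Complete boundedness of a bilinear map controls its values against a Haagerup-type norm of $\sum_i m_i\otimes n_i$, not the spatial norm, and for a general operator $d$ --- even one commuting with $R\ovl\otimes S$ --- the map $u\mapsto\sum_i(m_i\otimes I)d(I\otimes n_i)$ is unbounded on $M\otimes_{\min}N$. (It is bounded when $d$ commutes with $M\otimes I$ or with $I\otimes N$, since then it equals $u\mapsto du$ or $u\mapsto ud$; commutation with $R\ovl\otimes S$ is far weaker.) Lemmas \ref{lem2.1} and \ref{lem2.2} cannot be invoked here: they take maps already known to be bounded on the von Neumann tensor product and upgrade them to completely bounded ones, which is the opposite of what your argument requires.

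The missing ingredient is precisely the content of Proposition \ref{pro3.3}, which your argument never uses: $f$ is a derivation on all of $M\otimes N$, not merely a map satisfying the one-sided formula \eqref{eq3.16}. The paper subtracts from $f$ the inner derivation implemented by an element $t$ implementing $f|_{M\otimes I}$, obtaining a derivation $\delta$ that vanishes on $M\otimes I$ and is therefore $(M\otimes I)$-modular on \emph{both} sides; the two-sided identity \eqref{eq3.27} forces the range of $\delta|_{I\otimes N}$ into $(M\otimes I)'\cap\bigl(\langle M,e_R\rangle\ovl\otimes\langle N,e_S\rangle\bigr)=JZ(M)J\ovl\otimes\langle N,e_S\rangle$. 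Since this relative commutant is hyperfinite, a conditional expectation moves the element $b$ implementing $\delta|_{I\otimes N}$ into it, so $b$ commutes with $M\otimes I$ and $\delta=\partial b$ on all of $M\otimes N$, which is bounded and normal. Some argument of this kind, replacing your $d$ by an element commuting with one full tensor factor, is indispensable; without it the extension of $T$, and hence of $f$ and $\xi$, is not justified.
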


\begin{proof}
From Proposition \ref{pro3.3}, $f$ is a derivation on $M\otimes N$ with values in $\langle M\ovl\otimes N, e_{R\ovl\otimes S}\rangle = \langle M,e_R\rangle \ovl\otimes \langle N,e_S\rangle$. By Lemma \ref{lem3.2} (ii), $f|_{M\otimes I}$ is a completely bounded derivation implemented by an element $t\in\langle M,e_R\rangle \ovl\otimes \langle N,e_S\rangle$. Define a derivation $\delta\colon \ M\otimes N\to \langle M,e_R\rangle \ovl\otimes \langle N,e_S\rangle$ by
\begin{equation}\label{eq3.25}
\delta(m\otimes n) = f(m\otimes n) - [t(m\otimes n) - (m\otimes n)t],\qquad m\in M,\  n\in N.
\end{equation}
Then $\delta|_{M\otimes I} = 0$ from \eqref{eq3.25}, so $\delta$ is $(M\otimes I)$-modular. From Lemma \ref{lem3.2} (ii), $f|_{1\otimes N}$ is a derivation implemented by an element of $\langle M,e_R\rangle \ovl\otimes \langle N,e_S\rangle$, so from \eqref{eq3.25} there is an element $b$ in this algebra such that
\begin{equation}\label{eq3.26}
\delta(I\otimes n)  = b(I\otimes n) - (I\otimes n)b,\qquad n\in N.
\end{equation}
The $(M\otimes I)$-modularity of $\delta$ shows that
\begin{equation}\label{eq3.27}
 (m\otimes I) \delta(I\otimes n) = \delta(m\otimes n) = \delta(I\otimes n)(m\otimes  I),\qquad m\in M,\  n\in N,
\end{equation}
and we conclude that the range of $\delta|_{I\otimes N}$ lies in $(M\otimes I)'\cap \langle M,e_R\rangle \ovl\otimes \langle N,e_S\rangle$. This algebra is $(M'\cap \langle M,e_R\rangle) \ovl\otimes \langle N,e_S\rangle$, equal to $(JMJ\cap (JRJ)') \ovl\otimes \langle N,e_S\rangle$, and in turn equal to $(JZ(M)J)\ovl\otimes \langle N,e_S\rangle$. The latter algebra is hyperfinite, so if we take a conditional expectation onto it and apply this to \eqref{eq3.26}, then we conclude that the element $b$ of \eqref{eq3.26} may be assumed to lie in $(JZ(M)J)\ovl\otimes \langle N,e_S\rangle$. Then $b$ commutes with $M\otimes I$, so 
\begin{align}
 \delta(m\otimes n) &= (m\otimes I) \delta(I\otimes n) = (m\otimes I) [b(I\otimes n) - (I\otimes n)b]\notag\\
\label{eq3.28}
&= b(m\otimes n) - (m\otimes n)b,\qquad m\in M,\  n\in N.
\end{align}
Thus $\delta$ has a unique bounded normal extension to $M\ovl\otimes N$, and \eqref{eq3.25} shows that the same is then true for $f$. Since $\xi=f-\gamma$ on $M\otimes N$ from \eqref{eq3.11}, and $\gamma$ is already bounded and normal on $M\ovl\otimes N$, this gives a bounded normal extension of $\xi$ to $M\ovl\otimes N$.
\end{proof}

\begin{rem}\label{rem3.5}
Equation \eqref{eq3.25} shows that the derivation $f$ on $M\otimes N$ has a unique normal
extension to $M\ovl\otimes N$. Taking ultraweak limits in the equation
\begin{equation}
f(xy)=xf(y)+f(x)y,\qquad x,y\in M\otimes N,
\end{equation}
shows that this extension is also a derivation on 
$M\ovl\otimes N$.$\hfill\square$
\end{rem}
We now come to the main result of this section.

\begin{thm}\label{thm3.5}
Let $M$ and $N$ be separable type {\rm II}$_1$ von Neumann algebras. Then
\begin{equation}\label{eq3.29}
 H^2(M\ovl\otimes N, M\ovl\otimes N) = 0.
\end{equation}
\end{thm}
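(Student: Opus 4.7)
The plan is to take an arbitrary bounded 2-cocycle $\phi : (M\ovl\otimes N) \times (M\ovl\otimes N) \to M\ovl\otimes N$ and exhibit it as a coboundary with cobounding map valued in $M\ovl\otimes N$. First I would apply the standard reduction results from \cite[Ch.~3]{SSbook} to arrange that $\phi$ satisfies (C1)--(C3), and then invoke Lemma \ref{lem3.1} to secure the additional normalization \eqref{eq3.3}, so that $\phi$ vanishes on pairs drawn from $M \otimes I$ and on pairs drawn from $I \otimes N$. The basic-construction argument described just before Lemma \ref{lem3.2} then produces a normal $(R\ovl\otimes S)$-modular map $\gamma : M\ovl\otimes N \to \langle M\ovl\otimes N, e_{R\ovl\otimes S}\rangle$ with $\phi = \partial\gamma$.

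Next, I would set $f(m\otimes n) = \phi(m\otimes I, I\otimes n) + \gamma(m\otimes n)$ and $\xi(m\otimes n) = \phi(m\otimes I, I\otimes n)$ on the algebraic tensor product $M \otimes N$, so that $f = \xi + \gamma$ on this subalgebra. Proposition \ref{pro3.3} gives that $f$ is a derivation on $M \otimes N$, Proposition \ref{pro3.4} extends $\xi$ to a bounded normal map $M \ovl\otimes N \to M \ovl\otimes N$, and Remark \ref{rem3.5} extends $f$ to a bounded normal derivation on $M \ovl\otimes N$ with values in $\langle M\ovl\otimes N, e_{R\ovl\otimes S}\rangle$. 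Since the three maps $f$, $\xi$, $\gamma$ are all bounded and separately normal, and since $M \otimes N$ is ultraweakly dense in $M \ovl\otimes N$, the identity $f = \xi + \gamma$ persists on all of $M \ovl\otimes N$.

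The conclusion is then immediate: $\partial f = 0$ because $f$ is a derivation, while $\partial f = \partial\xi + \partial\gamma = \partial\xi + \phi$, so $\phi = -\partial\xi = \partial(-\xi)$. The essential point is that the cobounding map $-\xi$ takes values in $M \ovl\otimes N$ rather than in the strictly larger hyperfinite overalgebra that houses the range of $\gamma$; this range control is precisely what Proposition \ref{pro3.4} delivers, via the argument that forces the implementing element $b$ into $(JZ(M)J)\ovl\otimes \langle N, e_S\rangle$ and hence into the commutant of $M \otimes I$. All of the serious analysis, including the use of Lemmas \ref{lem2.1}--\ref{lem2.4} and the cocycle identity manipulations of Proposition \ref{pro3.3}, is already packaged into the preceding results; the theorem itself is then a one-line assembly. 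The main conceptual obstacle is simply recognizing that the two maps $\xi$ and $\gamma$, one landing in $M \ovl\otimes N$ and the other in the basic-construction algebra, must be balanced so that their difference $-\xi$ bounds $\phi$ within the correct bimodule.
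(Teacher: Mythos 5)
Your proposal is correct and follows essentially the same route as the paper's own proof: reduce to (C1)--(C3) and \eqref{eq3.3}, construct $\gamma$ via the basic construction, and then combine Propositions \ref{pro3.3}, \ref{pro3.4} and Remark \ref{rem3.5} to write $\phi = \partial\gamma = \partial f - \partial\xi = \partial(-\xi)$ with $-\xi$ valued in $M\ovl\otimes N$. The paper's Theorem \ref{thm3.5} proof is exactly this one-line assembly, and your emphasis on the range control of $\xi$ as the essential point matches the paper's intent.
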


\begin{proof}
We have already reduced consideration of a general cocycle $\phi$ to one which satisfies (C1)--(C3) and \eqref{eq3.3}. With the previously established notation, Proposition \ref{pro3.4} and Remark \ref{rem3.5} show that $\xi$ and $f$ have bounded normal extensions from $M\otimes N$ to $M\ovl\otimes N$.  Using the same letters for the extensions, we see that $\xi$ maps $M\ovl\otimes N$ to itself, while  $f$ is a derivation on $M\ovl\otimes N$ from Remark \ref{rem3.5}. Thus
\begin{equation}\label{eq3.30}
 \phi = \partial \gamma = \partial f - \partial\xi = \partial(-\xi)
\end{equation}
on $(M\ovl\otimes N) \times (M\ovl\otimes N)$. This shows that $\phi$ is a coboundary with respect to the bounded linear map $-\xi$, proving the result.
\end{proof}

\begin{rem}\label{rem3.6}
We will require one more piece of information about maps $\xi$ on $M \ovl\otimes N$ for which $\phi =\partial\xi$, namely that they can be chosen so that $\|\xi\| \le C\|\phi\|$ for an absolute constant $C$. The argument is already essentially in \cite[Lemma 6.5.1]{SSbook}, so we only sketch it here.

If no such $C$ existed, then it would be possible to find separable type II$_1$ algebras $M_n$ and $N_n$ for $n\ge 1$, and cocycles $\phi_n$ on $M_n\ovl\otimes N_n$ of norm 1 so that any $\xi_n$ satisfying $\phi_n=\partial \xi_n$ necessarily had norm at least $n$. Form separable algebras $M = \bigoplus\limits^\infty_{n=1} M_n$ and $N = \bigoplus\limits^\infty_{n=1} N_n$ and define a cocycle $\phi$ on $M\ovl\otimes N$ by
\[
 \phi(m_i\otimes n_j, \widetilde m_k \otimes \widetilde n_\ell)  = \phi_i(m_i\otimes n_i, \widetilde m_i\otimes \widetilde n_i)
\]
when $i=j=k=\ell$, and 0 otherwise. By Theorem \ref{thm3.5} there exists a bounded map $\xi$ on $M\ovl\otimes N$ so that $\phi=\partial\xi$ (which can be assumed to be $Z(M\ovl\otimes N)$-modular), but this would then contradict the lower bounds on $\|\xi_n\|$ by restricting $\xi$ to the component algebras.$\hfill\square$
\end{rem}

\section{The general case}\label{sec4}

\indent 

The techniques of Section  \ref{sec3} relied heavily on the existence of hyperfinite subalgebras whose relative commutants are the center, and these are only guaranteed to exist in the separable case. We will use Theorem \ref{thm3.5} and Remark \ref{rem3.6} to derive the general result, but we require some preliminary lemmas. A complication for a general type II$_1$ von Neumann algebra $M$ is that it need not have a faithful normal trace. However, a standard maximality argument gives a family of central projections $p_\lambda$ with sum $I$ such that each $Mp_\lambda$ has such a trace. Until we reach Theorem \ref{thm4.5}, we restrict attention to those algebras which do have faithful normal traces.

\begin{lem}\label{lem4.1}
Let $M$ and $N$ be type {\rm II}$_1$ von Neumann algebras with faithful normal unital traces $\tau_M$ and $\tau_N$ respectively, and let $Q\subseteq M\ovl\otimes N$ be a separable von Neumann subalgebra. Then there exist separable type {\rm II}$_1$ von Neumann subalgebras $M_0\subseteq M$ and $N_0\subseteq N$ such that $Q\subseteq M_0 \ovl\otimes N_0$.
\end{lem}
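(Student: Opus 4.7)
The plan is to construct $M_0$ and $N_0$ by taking a countable ``skeleton'' of approximating elementary tensors for a countable generating set of $Q$, and then verifying that the von Neumann algebras they generate (after including a hyperfinite II$_1$ subfactor for type reasons) do the job.

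First I would equip $M\ovl\otimes N$ with the faithful normal trace $\tau=\tau_M\otimes\tau_N$, whose associated norm $\|x\|_2=\tau(x^*x)^{1/2}$ metrizes the strong operator topology on bounded subsets of $M\ovl\otimes N$. Since $Q$ is separable, choose a countable $\|\cdot\|$-bounded, $\sigma$-strongly dense sequence $\{q_k\}_{k\ge 1}$ in $Q$. Now the algebraic tensor product $M\otimes_{\mathrm{alg}}N$ is $\sigma$-strongly dense in $M\ovl\otimes N$, so by the Kaplansky density theorem, for each $k$ there is a sequence
\[
x_{k,j}=\sum_{i=1}^{\ell_{k,j}}m_{k,j,i}\otimes n_{k,j,i}\in M\otimes_{\mathrm{alg}}N,\qquad \|x_{k,j}\|\le\|q_k\|,
\]
with $\|x_{k,j}-q_k\|_2\to 0$ as $j\to\infty$.

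Next, fix hyperfinite type II$_1$ subfactors $R_M\subseteq M$ and $R_N\subseteq N$, which exist since $M$ and $N$ are type II$_1$. Let $M_0$ be the von Neumann subalgebra of $M$ generated by $R_M$ together with the countable set $\{m_{k,j,i}:k,j\ge 1,\ 1\le i\le\ell_{k,j}\}$, and define $N_0\subseteq N$ analogously. Both $M_0$ and $N_0$ are separable, being generated by countably many elements in presence of a separable hyperfinite subfactor. Each $x_{k,j}$ lies in $M_0\ovl\otimes N_0$ by construction; the bounded $\|\cdot\|_2$-convergence $x_{k,j}\to q_k$ is convergence in the strong operator topology, and $M_0\ovl\otimes N_0$ is strongly closed, so $q_k\in M_0\ovl\otimes N_0$ for every $k$. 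Since $\{q_k\}$ ultraweakly generates $Q$, we conclude $Q\subseteq M_0\ovl\otimes N_0$.

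The only remaining point is the type II$_1$ property for $M_0$ and $N_0$. Because $\tau_M$ restricts to a faithful normal finite trace on $M_0$, the algebra $M_0$ is finite, so it is a direct sum of a type I piece $M_0 p$ and a type II$_1$ piece $M_0(1-p)$ for some central projection $p\in Z(M_0)$. Since $p$ is central in $M_0\supseteq R_M$ and $R_M$ is a factor, $R_M p$ equals either $0$ or $R_M$; in the latter case $R_M$ embeds unitally into the type I finite algebra $M_0p$, which is impossible because a type II$_1$ factor admits no unital $*$-homomorphism into any finite direct sum of $L^\infty(X,M_n(\mathbb{C}))$'s (after direct integral decomposition one would obtain a unital $*$-homomorphism from a II$_1$ factor into some $M_n(\mathbb{C})$). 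Hence $p=0$ and $M_0$ is type II$_1$; the identical argument handles $N_0$.

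The proof is largely a bookkeeping exercise; the main thing to be careful about is the final type II$_1$ verification, where simply generating $M_0$ and $N_0$ from the approximating tensor factors is not quite enough and the hyperfinite II$_1$ subfactor must be included to rule out a type I summand.
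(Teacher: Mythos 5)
Your proof is correct and takes essentially the same route as the paper's: approximate a countable ultraweakly dense sequence in $Q$ in the $\|\cdot\|_2$-norm by norm-bounded finite sums of elementary tensors (via Kaplansky density), and let $M_0$ and $N_0$ be generated by the first and second tensor legs. The only difference is cosmetic: the paper secures the type II$_1$ property by assuming at the outset that $Q$ contains arbitrarily large matrix subalgebras of $M$ and $N$, whereas you adjoin hyperfinite II$_1$ subfactors $R_M$, $R_N$ to the generating sets and then rule out a type I summand; both devices work.
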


\begin{proof}
 We may certainly assume that $Q$ contains arbitrarily large matrix subalgebras of $M$ and  $N$, and this will guarantee that the $M_0$ and $N_0$ that we construct are type II$_1$.

Let $\tau=\tau_M\otimes \tau_N$ be a faithful normal unital trace on $M\ovl\otimes N$, and fix a countable ultraweakly dense sequence $\{q_n\}^\infty_{n=1}$ in the unit ball of $Q$. The identity map of $M\ovl\otimes N$ into $L^2(M\ovl\otimes N)$ is ultraweakly-to-weakly continuous so the unit ball of $M\otimes N$, which is ultraweakly dense in that of $M\ovl\otimes N$ by the Kaplansky density theorem, also has this property in the $\|\cdot\|_2$-norm. Thus each $q_n$ is the $\|\cdot\|_2$-limit of sequences from $M\otimes N$, each element of which is a finite sum  of elementary tensors. Let $M_0$ be (respectively $N_0$) the von Neumann algebra generated by the first (respectively second) entries in all of these elementary tensors. Each is separable. Then $L^2(Q) \subseteq L^2(M_0\ovl\otimes N_0)$ and so $Q \subseteq M_0 \ovl\otimes N_0$ by considering the conditional expectation of $M\ovl\otimes N$ onto $M_0\ovl\otimes N_0$, which also defines the Hilbert space projection of $L^2(M\ovl\otimes N)$ onto $L^2(M_0\ovl\otimes N_0)$ (see the proof of \cite[Lemma 2.2]{Cam}).
\end{proof}

\begin{lem}\label{lem4.2}
Let $M$ and $N$ be type {\rm II}$_1$ von Neumann algebras with faithful normal unital traces $\tau_M$ and $\tau_N$ respectively. Let $\phi$ be a separately normal bounded bilinear map from $(M\ovl\otimes N) \times (M\ovl\otimes N)$ to $ M\ovl\otimes N$. Given a finite set $F\subseteq M\ovl\otimes N$, there exist separable type {\rm II}$_1$ von Neumann subalgebras $M_F\subseteq M$ and $N_F\subseteq N$ such that $F\subseteq M_F\ovl\otimes N_F$ and $\phi$ maps $(M_F\ovl\otimes N_F)\times (M_F\ovl\otimes N_F)$ to $(M_F\ovl\otimes N_F)$.
\end{lem}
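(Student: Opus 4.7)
The approach is an iterative $\sigma$-type construction: I will build an increasing chain of separable tensor-product subalgebras that eventually absorbs the image of $\phi$, closing up under successive countable enlargements and then passing to the ultraweak closure. Separate normality of $\phi$ (plus Kaplansky density) is the mechanism that lets a property verified on a countable dense set propagate to the whole closure. Lemma \ref{lem4.1} is used repeatedly to keep the enlargements of tensor-product form.

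The construction proceeds as follows. Let $Q_0$ be the von Neumann algebra generated by $F$ together with a sequence of matrix subalgebras ${\bb M}_n\subseteq M$ and ${\bb M}_n\subseteq N$ of increasing size (to force type II$_1$ later). Then $Q_0$ is separable, so by Lemma \ref{lem4.1} there exist separable type II$_1$ subalgebras $M^{(0)}\subseteq M$, $N^{(0)}\subseteq N$ with $Q_0\subseteq M^{(0)}\ovl\otimes N^{(0)}$. Fix a countable ultraweakly dense subset $D^{(0)}$ of the unit ball of $M^{(0)}\ovl\otimes N^{(0)}$. Inductively, suppose $M^{(k)}$, $N^{(k)}$ and a countable ultraweakly dense subset $D^{(k)}$ of the unit ball of $M^{(k)}\ovl\otimes N^{(k)}$ have been chosen, with $D^{(k-1)}\subseteq D^{(k)}$. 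The set
\[
 Q_{k+1} = Q_k \cup \{\phi(d,d') : d,d'\in D^{(k)}\}
\]
is countable, hence generates a separable subalgebra of $M\ovl\otimes N$. Apply Lemma \ref{lem4.1} again to obtain separable type II$_1$ subalgebras $M^{(k+1)}\supseteq M^{(k)}$, $N^{(k+1)}\supseteq N^{(k)}$ with $Q_{k+1}\subseteq M^{(k+1)}\ovl\otimes N^{(k+1)}$, and enlarge $D^{(k)}$ to a countable ultraweakly dense subset $D^{(k+1)}$ of the unit ball of $M^{(k+1)}\ovl\otimes N^{(k+1)}$.

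Now set $M_F$ and $N_F$ to be the ultraweak closures of $\bigcup_k M^{(k)}$ and $\bigcup_k N^{(k)}$ respectively. Each is a separable type II$_1$ von Neumann subalgebra (a countable union of countable ultraweakly dense sets is countable and dense in the limit), and $F\subseteq M^{(0)}\ovl\otimes N^{(0)}\subseteq M_F\ovl\otimes N_F$. To verify that $\phi$ stays inside $M_F\ovl\otimes N_F$, let $D = \bigcup_k D^{(k)}$. By Kaplansky density the unit ball of $\bigcup_k M^{(k)}\ovl\otimes N^{(k)}$ is ultraweakly dense in the unit ball of $M_F\ovl\otimes N_F$, so $D$ is ultraweakly dense in the unit ball of $M_F\ovl\otimes N_F$. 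By construction $\phi(D\times D)\subseteq M_F\ovl\otimes N_F$. Given $x,y$ in the unit ball of $M_F\ovl\otimes N_F$, choose nets $d_\alpha, d'_\beta\in D$ converging ultraweakly to $x,y$. By separate normality $\phi(d_\alpha, d'_\beta)\to \phi(x,d'_\beta)$ ultraweakly, and since $M_F\ovl\otimes N_F$ is ultraweakly closed we conclude $\phi(x,d'_\beta)\in M_F\ovl\otimes N_F$. A second application of separate normality in the second variable gives $\phi(x,y)\in M_F\ovl\otimes N_F$. A scaling argument handles the unbounded case.

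The only delicate point is the compatibility between the tensor-product containment (needed because the statement demands $M_F\ovl\otimes N_F$, not just some separable subalgebra of $M\ovl\otimes N$) and the closure under $\phi$. This is precisely what Lemma \ref{lem4.1} supplies at each stage, so the main obstacle is packaged into that lemma, and the present argument is a clean inductive bootstrap. I should also be slightly careful that enlarging $D^{(k)}$ to $D^{(k+1)}$ keeps the earlier $\phi$-images we computed, and that the type II$_1$ condition persists in the inductive limit; both follow from standard density/Kaplansky arguments as in the proof of Lemma \ref{lem4.1}.
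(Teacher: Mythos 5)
Your proof is correct and follows essentially the same route as the paper: iterate Lemma \ref{lem4.1} to build an increasing chain of separable tensor-product subalgebras absorbing the $\phi$-images at each stage, take ultraweak closures, and use separate normality (via a countable dense set / Kaplansky) to push the containment to the closure. The only cosmetic point is that Lemma \ref{lem4.1} does not by itself guarantee $M^{(k+1)}\supseteq M^{(k)}$, so you should include $D^{(k)}$ (or a generating set of $M^{(k)}\ovl\otimes N^{(k)}$) in $Q_{k+1}$, exactly as the paper does by letting $Q_{k+1}$ be generated by $M^{(k)}\ovl\otimes N^{(k)}$ together with the range of the restricted map.
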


\begin{proof}
We apply Lemma \ref{lem4.1} repeatedly. Let $Q_0$ be the von Neumann generated by $F$ and choose separable von Neumann algebras $M_0\subseteq M$ and $N_0\subseteq N$ so that $Q_0\subseteq M_0\ovl\otimes N_0$. Then let $Q_1$ be the separable von Neumann algebra generated by $M_0\ovl\otimes N_0$ and the range of $\phi|_{M_0\ovl\otimes N_0}$. Now choose separable von Neumann algebras so that $Q_1\subseteq M_1\ovl\otimes N_1$. By construction, $\phi$ maps $(M_0\ovl\otimes N_0)\times (M_0\ovl\otimes N_0)$ into $M_1\ovl\otimes N_1$. Continuing in this way, we obtain an ascending sequence $\{M_i\ovl\otimes N_i\}^\infty_{i=0}$ of separable von Neumann algebras so that $\phi$ maps $(M_i\ovl\otimes N_i)\times (M_i\ovl\otimes N_i)$ into $M_{i+1}\ovl\otimes N_{i+1}$. Define $M_F$ and $N_F$ as the respective ultraweak closures of $\bigcup\limits^\infty_{i=0} M_i$ and $\bigcup\limits^\infty_{i=0} N_i$. Then separate normality shows that $\phi$ maps $(M_F\ovl\otimes N_F)\times (M_F\ovl\otimes N_F)$ into $M_F\ovl\otimes N_F$ as required.
\end{proof}

The next result is a special case of the subsequent main result.

\begin{pro}\label{pro4.3}
Let $M$ and $N$ be type {\rm II}$_1$ von Neumann algebras with faithful normal unital traces. Then
\begin{equation}\label{eq4.1}
 H^2(M\ovl\otimes N, M\ovl\otimes N) = 0.
\end{equation}
\end{pro}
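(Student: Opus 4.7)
The strategy is a separable reduction: extract $\xi$ from cobounding maps on separable subalgebras by a weak$^*$-compactness argument, using Theorem \ref{thm3.5}, the uniform estimate of Remark \ref{rem3.6}, and the absorption principle of Lemma \ref{lem4.2}. A given 2-cocycle $\phi$ on $M\ovl\otimes N$ may first be replaced by an equivalent separately normal one (of controlled norm) using the standard reductions of \cite[Ch.~3]{SSbook}; this is what will allow Lemma \ref{lem4.2} to be invoked.

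For each finite subset $F\subseteq M\ovl\otimes N$, Lemma \ref{lem4.2} produces separable type II$_1$ subalgebras $M_F\subseteq M$ and $N_F\subseteq N$ with $F\subseteq M_F\ovl\otimes N_F$, such that $\phi$ restricts to a bounded 2-cocycle $\phi_F$ on $M_F\ovl\otimes N_F$ taking values in $M_F\ovl\otimes N_F$. Theorem \ref{thm3.5} yields a bounded linear map $\xi_F\colon M_F\ovl\otimes N_F\to M_F\ovl\otimes N_F$ with $\partial\xi_F=\phi_F$, and Remark \ref{rem3.6} lets us arrange $\|\xi_F\|\le C\|\phi\|$ for an absolute constant $C$. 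Since $\tau_M\otimes\tau_N$ restricts to a faithful normal trace on $M_F\ovl\otimes N_F$, there is a normal conditional expectation $\mathbb{E}_F\colon M\ovl\otimes N\to M_F\ovl\otimes N_F$. Set
\begin{equation*}
\tilde\xi_F = \xi_F\circ\mathbb{E}_F\colon M\ovl\otimes N\to M\ovl\otimes N,
\end{equation*}
a bounded linear map of norm at most $C\|\phi\|$ that agrees with $\xi_F$ on $M_F\ovl\otimes N_F$.

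Order the finite subsets of $M\ovl\otimes N$ by inclusion and fix an ultrafilter $\mathcal{U}$ on this directed set refining the tail filter. For each $x\in M\ovl\otimes N$ the net $\{\tilde\xi_F(x)\}$ lies in the weak$^*$-compact ball of radius $C\|\phi\|\|x\|$ in $M\ovl\otimes N$, so
\begin{equation*}
\xi(x) = \lim_{F\to\mathcal{U}}\tilde\xi_F(x)
\end{equation*}
exists in the weak$^*$ topology and defines a bounded linear $\xi\colon M\ovl\otimes N\to M\ovl\otimes N$ of norm at most $C\|\phi\|$. To verify $\phi=\partial\xi$, fix $x,y\in M\ovl\otimes N$; for every $F$ containing $\{x,y\}$ one has $x,y,xy\in M_F\ovl\otimes N_F$, whence $\tilde\xi_F$ agrees with $\xi_F$ on these elements and the cocycle identity for $\xi_F$ gives
\begin{equation*}
\phi(x,y)=\phi_F(x,y)=x\tilde\xi_F(y)-\tilde\xi_F(xy)+\tilde\xi_F(x)y.
\end{equation*}
Such $F$ are $\mathcal{U}$-cofinal, and left and right multiplication by a fixed operator are weak$^*$-continuous on $M\ovl\otimes N$, so taking the $\mathcal{U}$-limit yields $\phi(x,y)=x\xi(y)-\xi(xy)+\xi(x)y$, as required.

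\textbf{Main obstacle.} The entire argument pivots on the uniform estimate of Remark \ref{rem3.6}: without $\|\xi_F\|\le C\|\phi\|$ independent of $F$, the net $\{\tilde\xi_F\}$ would not be confined to a weak$^*$-compact set and the limit step would collapse. A secondary but essential point is the reduction of $\phi$ to a separately normal cocycle, which is needed so that Lemma \ref{lem4.2} can produce the absorbing separable subalgebras $M_F\ovl\otimes N_F$; and the faithful normal trace hypothesis is used both to have such a trace on each $M_F\ovl\otimes N_F$ (so that Theorem \ref{thm3.5} applies) and to produce the conditional expectations $\mathbb{E}_F$ that extend $\xi_F$ to the whole algebra.
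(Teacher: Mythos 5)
Your proposal is correct and follows essentially the same route as the paper: reduce to a separately normal cocycle, apply Lemma \ref{lem4.2} to get $\phi$-invariant separable subalgebras $M_F\ovl\otimes N_F$, cobound there with the uniform norm control of Theorem \ref{thm3.5} and Remark \ref{rem3.6}, and glue. The only difference is that you write out explicitly the final step (extension by trace-preserving conditional expectations followed by a point-weak$^*$ ultrafilter limit), which the paper delegates to the proof of \cite[Theorem 6.5.3]{SSbook}; your execution of that step is sound.
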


\begin{proof}
Theorem 3.3.1 of \cite{SSbook} allows us to restrict attention to a separately normal 2-cocycle $\phi$ on $M\ovl\otimes N$. For each finite subset $F$ of $M\ovl\otimes N$, let $M_F$ and $N_F$ be the separable von Neumann subalgebras constructed in Lemma \ref{lem4.2}, so that $\phi$ maps $(M_F\ovl\otimes N_F) \times (M_F\ovl\otimes N_F)$ to $M_F\ovl\otimes N_F$. Let $\phi_F$ be the restriction of $\phi$ to this subalgebra. By Theorem \ref{thm3.5}, there is a bounded linear map $\xi_F\colon \ M_F\ovl\otimes N_F\to M_F\ovl\otimes N_F$ so that $\phi_F = \partial\xi_F$, and Remark \ref{rem3.6} allows us to assume a uniform bound on $\|\xi_F\|$ independent of $F$. The construction of a bounded map $\xi\colon \ M\ovl\otimes N\to M\ovl\otimes N$ such that $\phi = \partial\xi$ now follows the proof of \cite[Theorem 6.5.3]{SSbook}.
\end{proof}

\begin{rem}\label{rem4.4}
An examination of the proof of \cite[Theorem 6.5.3]{SSbook} combined with Remark \ref{rem3.6} shows the existence of an absolute constant $K$ so that, under the hypotheses of Proposition \ref{pro4.3}, to each 2-cocycle $\phi$ on $M\ovl\otimes N$ there corresponds a bounded map $\xi$ on $M\ovl\otimes N$ satisfying $\phi=\partial\xi$ and $\|\xi\| \le K\|\phi\|$.$\hfill\square$
\end{rem}

The final step is to remove the hypothesis of faithful traces from Proposition \ref{pro4.3}.

\begin{thm}\label{thm4.5}
Let $M$ and $N$ be type {\rm II}$_1$ von Neumann algebras. Then
\begin{equation}\label{eq4.2}
 H^2(M\ovl\otimes N, M\ovl\otimes N)=0.
\end{equation}
\end{thm}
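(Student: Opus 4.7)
My plan is to deduce Theorem \ref{thm4.5} from Proposition \ref{pro4.3} by a central direct-sum argument, with Remark \ref{rem4.4} supplying the uniform norm control that makes the reassembly step work. First I would invoke \cite[Theorem 3.3.1]{SSbook} to reduce to a separately normal $2$-cocycle $\phi$ on $M\ovl\otimes N$, and then apply the cocycle-averaging techniques of \cite[Ch.~3]{SSbook} to assume further that $\phi$ is $Z(M\ovl\otimes N)$-multimodular. This multimodularity is the structural ingredient that lets $\phi$ split cleanly along central projections of $M\ovl\otimes N$.

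As noted in the paragraph preceding Lemma \ref{lem4.1}, a standard maximality argument produces families of central projections $\{p_\lambda\}\subseteq Z(M)$ and $\{q_\mu\}\subseteq Z(N)$, each summing strongly to the identity, so that every $Mp_\lambda$ and $Nq_\mu$ carries a faithful normal unital trace. The projections $e_{\lambda\mu}:=p_\lambda\otimes q_\mu$ are then pairwise orthogonal central projections of $M\ovl\otimes N$ summing to $I$, and each corner
\[
(M\ovl\otimes N)e_{\lambda\mu}=(Mp_\lambda)\ovl\otimes (Nq_\mu)
\]
carries the tensor-product faithful normal unital trace, so Proposition \ref{pro4.3} applies to it.

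The $Z$-multimodularity of $\phi$ gives
\[
\phi(xe_{\lambda\mu},ye_{\lambda'\mu'})=e_{\lambda\mu}e_{\lambda'\mu'}\phi(x,y),
\]
which vanishes when $(\lambda,\mu)\neq(\lambda',\mu')$ and equals $e_{\lambda\mu}\phi(x,y)$ otherwise. Consequently the restriction $\phi_{\lambda\mu}$ of $\phi$ to $(M\ovl\otimes N)e_{\lambda\mu}$ is a separately normal $2$-cocycle of norm at most $\|\phi\|$. By Proposition \ref{pro4.3} there exist bounded linear maps $\xi_{\lambda\mu}$ on $(M\ovl\otimes N)e_{\lambda\mu}$ with $\partial\xi_{\lambda\mu}=\phi_{\lambda\mu}$, and Remark \ref{rem4.4} furnishes the crucial uniform estimate $\|\xi_{\lambda\mu}\|\le K\|\phi\|$. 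I would then define
\[
\xi(x):=\sum_{\lambda,\mu}\xi_{\lambda\mu}(xe_{\lambda\mu}),\qquad x\in M\ovl\otimes N,
\]
where the sum converges ultraweakly thanks to the pairwise orthogonal central supports of its summands. The uniform bound makes $\xi$ bounded with $\|\xi\|\le K\|\phi\|$, and a short computation using centrality of the $e_{\lambda\mu}$ together with $Z$-multimodularity of $\phi$ verifies $\partial\xi=\phi$ on $(M\ovl\otimes N)\times(M\ovl\otimes N)$.

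The main obstacle is precisely the uniform norm estimate in Remark \ref{rem4.4}: without it the possibly uncountable sum defining $\xi$ would not assemble into a bounded map, and the argument would collapse. A secondary technical point is confirming that $\phi$ really does decompose as an orthogonal direct sum along the projections $e_{\lambda\mu}$, which is exactly what the $Z$-multimodularity reduction from \cite[Ch.~3]{SSbook} provides. Everything else amounts to routine bookkeeping.
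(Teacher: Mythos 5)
Your proposal is correct and follows essentially the same route as the paper's own proof: reduce to a separately normal, $Z(M\ovl\otimes N)$-multimodular cocycle, split along the central projections $p_\lambda\otimes q_\mu$, apply Proposition \ref{pro4.3} on each corner, and reassemble using the uniform bound from Remark \ref{rem4.4}. Your write-up simply spells out in more detail the orthogonality computation and the ultraweak convergence of the direct sum, which the paper leaves implicit.
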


\begin{proof}
As noted earlier, there are orthogonal sets of central projections $p_\lambda\in Z(M)$ and $q_\mu\in Z(N)$, each summing to $I$, so that $Mp_\lambda$ and $Nq_\mu$ have faithful normal unital traces. Given a separately normal 2-cocycle $\phi$ on $M\ovl\otimes N$, \cite[Theorem 3.2.7]{SSbook} allows us to assume that it is $Z(M\ovl\otimes N)$-multimodular. Thus the restriction $\phi_{\lambda,\mu}$ of $\phi$ to $Mp_\lambda \ovl\otimes Nq_\mu$ maps back to this algebra. By Proposition \ref{pro4.3} and Remark \ref{rem4.4}, there are maps $\xi_{\lambda,\mu}\colon \ Mp_\lambda \ovl\otimes Nq_\mu\to Mp_\lambda \ovl\otimes Nq_\mu$ so that $\phi_{\lambda,\mu} = \partial\xi_{\lambda,\mu}$ with a uniform bound on $\|\xi_{\lambda,\mu}\|$. This allows us to define a bounded map $\xi\colon \ M\ovl\otimes N\to M\ovl\otimes N$ by $\xi = \bigoplus\limits_{\lambda,\mu} \xi_{\lambda,\mu}$, and then $\phi=\partial\xi$.
\end{proof}

\end{document}